\newtheorem{theorem}{Theorem}[section]
\newtheorem{example}[theorem]{Example}
\newtheorem{lemma}[theorem]{Lemma}
\newtheorem{proposition}[theorem]{Proposition}
\newtheorem{corollary}[theorem]{Corollary}
\newtheorem{definition}[theorem]{Definition}
\newtheorem{remark}[theorem]{Remark}
\newcommand{\Xinf}{\ell_\infty(X)}
\newcommand{\dsupan}{d_{s, (a_n)}}
\newcommand{\dpan}{d_{p, (a_n)}}
\newcommand{\N}{\mathbb N}
\newcommand{\R}{\mathbb R}
\newcommand{\on}{\operatorname}
\begin{document}

\title{A fixed point theorem for mappings on the $\ell_\infty$-sum of a metric space and its application}


\author{Jacek Jachymski}
\address{Institute of Mathematics, \L\'od\'z University of Technology, W\'olcza\'nska 215, 93-005
\L\'od\'z, Poland}
\email {jacek.jachymski@p.lodz.pl}

\author{\L ukasz Ma\'slanka}
\address{Institute of Mathematics, \L\'od\'z University of Technology, W\'olcza\'nska 215, 93-005
\L\'od\'z, Poland}
\email {lukasz$\_$maslanka@interia.pl}

\author{Filip Strobin}
\address{Institute of Mathematics, \L\'od\'z University of Technology, W\'olcza\'nska 215, 93-005
\L\'od\'z, Poland}
\email {filip.strobin@p.lodz.pl}
\subjclass[2010]{Primary:  54E40, 54H25, 47H10; Secondary: 54B10, 54E35} 
\keywords{generalized fixed points, contractive fixed points, $\ell_\infty$-sum of a metric space, Tychonoff product topology}

\begin{abstract}
The aim of this paper is to prove a counterpart of the Banach fixed point principle for mappings $f:\Xinf\to X$, where $X$ is a metric space and $\Xinf$ is the space of all bounded sequences of elements from~$X$. Our result generalizes the theorem obtained by Miculescu and Mihail in 2008, who proved a~counterpart of the Banach principle for mappings $f:X^m\to X$, where $X^m$ is the Cartesian product of $m$ copies of $X$. We also compare our result with a recent one due to Secelean, who obtained a weaker assertion under less restrictive assumptions.
We illustrate our result with several examples and give an application.
\end{abstract}

\maketitle



\section{Introduction}
If $(X,d)$ is a metric space and $m\in\N$, then by $X^m$ we denote the Cartesian product of $m$ copies of $X$. We~endow $X^m$ with the maximum metric:
\begin{equation*}
d_m((x_0,...,x_{m-1}),(y_0,...,y_{m-1})):= \max\{d(x_0,y_0),...,d(x_{m-1},y_{m-1})\}.
\end{equation*}
Miculescu and Mihail in \cite{MM} and \cite{M} obtained an interesting generalization of the Banach principle for mappings defined on $X^m$. Namely, they proved the following
\begin{theorem}\label{newf1}
Assume that $(X,d)$ is a complete metric space and $g:X^m\to X$ is such that the Lipschitz constant $Lip(g)<1$. Then there exists a unique point $x_*\in X$ such that $g(x_*,...,x_*)=x_*$. Moreover, for every $x_0,...,x_{m-1}\in X$, the sequence $(x_k)$ defined by
\begin{equation}\label{proceduraiteracyjna}
x_{m+k}=g(x_{k+m-1},...,x_{k}),\;\;k\geq 0,
\end{equation}
converges to $x_*$. 
\end{theorem}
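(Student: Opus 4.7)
The plan is to reduce the theorem to the classical Banach contraction principle applied to a carefully chosen auxiliary self-map of $X^m$. Define $G:X^m\to X^m$ by
\[
G(x_0,\ldots,x_{m-1}):=\bigl(x_1,x_2,\ldots,x_{m-1},\,g(x_{m-1},\ldots,x_0)\bigr).
\]
Two easy observations set the stage: $(X^m,d_m)$ is complete whenever $(X,d)$ is, and a tuple $\bar x=(x_0,\ldots,x_{m-1})$ is a fixed point of $G$ if and only if $x_0=x_1=\cdots=x_{m-1}=:x_*$ and $g(x_*,\ldots,x_*)=x_*$.

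The main technical step is to show that although $G$ is in general only non-expansive (the first $m-1$ components are coordinate shifts), the iterate $G^m$ is a strict contraction with constant $L:=\operatorname{Lip}(g)<1$. Writing $d_k^{(i)}:=d(G^k(\bar x)_i,G^k(\bar y)_i)$ for $\bar x,\bar y\in X^m$, the definition of $G$ yields
\[
d_{k+1}^{(i)}=d_k^{(i+1)}\ \text{for}\ i<m-1,\qquad d_{k+1}^{(m-1)}\le L\max_{0\le j\le m-1}d_k^{(j)}.
\]
Consequently the maximum $M_k:=\max_j d_k^{(j)}$ is non-increasing, and a direct index chase shows that after exactly $m$ steps every coordinate of $G^m(\bar x)$ has been produced by a single application of $g$ acting on quantities bounded by $M_0$; hence $d_m^{(i)}\le L\,M_0$ for all $i$, so $d_m(G^m(\bar x),G^m(\bar y))\le L\,d_m(\bar x,\bar y)$.

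Third, Banach's principle applied to $G^m$ on $(X^m,d_m)$ furnishes a unique fixed point $\bar x_*$. Because $G$ commutes with $G^m$, the image $G(\bar x_*)$ is also fixed by $G^m$, hence $G(\bar x_*)=\bar x_*$ by uniqueness. The preliminary observation then gives $\bar x_*=(x_*,\ldots,x_*)$ for some $x_*\in X$ with $g(x_*,\ldots,x_*)=x_*$; any other such point would yield a second fixed point of $G^m$, contradicting uniqueness.

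Finally, a straightforward induction on $n$ shows that the iteration (\ref{proceduraiteracyjna}) is nothing but
\[
G^n(x_0,\ldots,x_{m-1})=(x_n,x_{n+1},\ldots,x_{n+m-1}),\qquad n\ge 0.
\]
Applying the Banach convergence conclusion for $G^m$ separately to each of the $m$ starting tuples $G^r(x_0,\ldots,x_{m-1})$ with $r=0,\ldots,m-1$, and splicing the resulting subsequences, we obtain $G^n(x_0,\ldots,x_{m-1})\to\bar x_*$ in $d_m$, and therefore $x_n\to x_*$. The main obstacle is the bookkeeping behind the assertion that $G^m$ contracts by the factor $L$; the rest of the argument is a routine unpacking.
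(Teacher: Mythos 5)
Your proof is correct, but it takes a genuinely different route from the paper's. The paper deduces Theorem \ref{newf1} from its main result: one picks $q\in(0,1)$ with $Lip(g)<q^{m-1}$, extends $g$ to $f:\ell_\infty(X)\to X$ by $f(x_0,x_1,\dots):=g(x_0,\dots,x_{m-1})$, checks that $L_{s,q}(f)\le Lip(g)/q^{m-1}<1$, and then applies Theorem \ref{filip7} after identifying the recursion (\ref{proceduraiteracyjna}) with the sequence of generalized iterates of $f$ at $(x_{m-1},\dots,x_0,x_0,x_0,\dots)$. You instead stay inside $X^m$, pass to the companion shift map $G$, prove that the $m$-th iterate $G^m$ is an $L$-contraction for the maximum metric, and invoke the classical Banach principle for $G^m$; this is essentially the standard self-contained argument for Pre\v si\'c-type results and needs none of the $\ell_\infty(X)$ machinery. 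The one nontrivial step --- that every coordinate of $G^m(\bar x)$ is the output of a single application of $g$ to a tuple whose coordinatewise distances from the corresponding tuple for $\bar y$ are bounded by $M_0$ --- is argued correctly via the monotonicity of $M_k$, and the splicing of the $m$ residue-class subsequences legitimately upgrades convergence of $(G^m)^k$ to convergence of $G^n$. What the paper's longer route buys is the demonstration that its Theorem \ref{filip7} genuinely subsumes Theorem \ref{newf1}, together with the explicit error estimate (\ref{supestimation}); your route is shorter and more elementary, though as written it does not record a convergence rate (one could extract $d(x_n,x_*)\le C\,L^{\lfloor n/m\rfloor}$ from your argument with no extra work).
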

{A point $x_*\in X$ which satisfies the equality $g(x_*,...,x_*)=x_*$ is called \emph{a generalized fixed point of $g$}}. \\
An interesting study of such fixed points can also be found in the paper \cite{CP} of Professor Ljubomir B. \'Ciri\'c and S.B. Pre\v si\'c.\\ 
Theorem \ref{newf1} gave a background for a version of the Hutchinson--Barnsley fractals theory for such mappings defined on finite Cartesian products -- see the above mentioned papers and the references therein. Also, note that the above theorem can be extended to mappings which satisfy weaker contractive conditions -- see, e.g., \cite{Ser} and \cite{SS}.

The next step was done by Secelean \cite{S}. Denote by $\ell_\infty(X)$ the $\ell_\infty$-sum of a metric space $X$, that is, the set of all bounded sequences of elements of $X$:
$$\Xinf:=\{(x_k)\subset X: (x_k) \textrm{ is bounded}\}.$$
Endow $\Xinf$ with the supremum metric:
\begin{equation}\label{se0}
d_{s}((x_n),(y_n)):=\sup\{d(x_n,y_n):n\in\N^*\},
\end{equation}
where $\N^*:=\{0,1,2,...\}$ (throughout the paper we enumerate sequences by nonnegative integers).
\begin{remark}\emph{
Let us notice that the notion of the $\Xinf$-sum of a family of spaces originates from functional analysis; see, e.g., \cite[p. xii]{LT}.}
\end{remark}
\begin{remark}\emph{
{It is also worth to observe that} if $X$ is bounded, then $\Xinf$ is exactly the product of countably many copies of $X$, that is, $\ell_\infty(X)=\prod_{k=0}^\infty X$. {On the other hand, if $X$ is unbounded, then $\ell_\infty(X)$ is a proper subspace of $\prod_{k=0}^\infty X$.}}
\end{remark}

If $f:\Xinf \to X$, then we define $f_{s}: X\to X$ by
\begin{equation}\label{se0,5}
f_{s}(x):=f(x,x,...),\;\;x\in X.
\end{equation}{
A point $x_*\in X$ is called \emph{a generalized fixed point of $f$}, if $x_*$ is a fixed point of $f_s$, i.e., if $x_*$ satisfies:
\begin{equation*}
f(x_*,x_*,...)=x_*.
\end{equation*}}
Secelean \cite[Theorem 3.1]{S} proved the following fixed point theorem:
\begin{theorem}\label{secelean}
Assume that $X$ is a complete metric space and $f:\Xinf \to X$ is such that $Lip(f)<1$. Then there exists a unique generalized fixed point $x_*$ of $f$.
Moreover, for every $x=(x_n)\in \Xinf$, the sequence $(y_k)$ defined by
\begin{equation}\label{ciagseceleana}
y_k:=f\left(f_s^{k}(x_0),f_s^{k}(x_1),f_s^{k}(x_2),...\right),\;k\geq 0,
\end{equation} 
converges to $x_*$. More precisely, for every $k\in\N^*$,
$$
d(x_*,y_k)\leq \frac{\on{Lip}(f)^{k+1}}{1-\on{Lip}(f)}\sup\{d(f_s(x_i),x_i):i\in\N^*\}.
$$
\end{theorem}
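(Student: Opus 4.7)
The natural strategy is to reduce everything to the classical Banach contraction principle applied to the self map $f_{s}\colon X\to X$.

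\textbf{Step 1: contractivity of $f_s$.} I would first observe that for any $x,y\in X$, the constant sequences $(x,x,\ldots)$ and $(y,y,\ldots)$ are at supremum distance $d(x,y)$, so
\[
d(f_{s}(x),f_{s}(y))=d(f(x,x,\ldots),f(y,y,\ldots))\le\on{Lip}(f)\, d(x,y).
\]
Thus $f_{s}$ is a contraction on the complete space $X$ with Lipschitz constant $L:=\on{Lip}(f)<1$. By Banach's principle $f_{s}$ has a unique fixed point $x_{*}$, which is by definition the unique generalized fixed point of $f$.

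\textbf{Step 2: componentwise convergence of the iterates.} For any $x=(x_{n})\in\Xinf$, Banach's a priori estimate applied to $f_s$ gives, for each index $i$,
\[
d(f_{s}^{k}(x_{i}),x_{*})\le\frac{L^{k}}{1-L}\,d(f_{s}(x_{i}),x_{i}).
\]
Set $C:=\sup_{i}d(f_{s}(x_{i}),x_{i})$. This quantity is finite: because $(x_i)$ is bounded and $f_{s}$ is $L$-Lipschitz, $d(f_{s}(x_{i}),x_{i})\le d(f_{s}(x_{i}),f_{s}(x_{0}))+d(f_{s}(x_{0}),x_{0})+d(x_{0},x_{i})$ is bounded in $i$. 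So the previous estimate is uniform in $i$.

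\textbf{Step 3: applying $f$ to the shifted sequence.} The uniform bound from Step 2 shows that the sequence $(f_{s}^{k}(x_{n}))_{n}$ is bounded, hence lies in $\Xinf$, and moreover
\[
d_{s}\bigl((f_{s}^{k}(x_{n}))_{n},(x_{*},x_{*},\ldots)\bigr)\le\frac{L^{k}}{1-L}\,C.
\]
Verifying membership in $\Xinf$ is the only subtle point — it's the place where one must use that $(x_n)$ is bounded — and once it is in hand the rest is automatic. Applying the Lipschitz property of $f$ once more and using $f(x_{*},x_{*},\ldots)=x_{*}$ yields
\[
d(y_{k},x_{*})\le L\cdot d_{s}\bigl((f_{s}^{k}(x_{n}))_{n},(x_{*})_{n}\bigr)\le\frac{L^{k+1}}{1-L}\,C,
\]
which is exactly the claimed rate, and in particular $y_{k}\to x_{*}$.

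The only genuine obstacle is the well-definedness issue in Step 3; everything else is a direct invocation of the classical Banach theorem applied to $f_{s}$ together with one extra Lipschitz step to pass from $f_{s}^{k}$ to $f$. Observe that no averaging or diagonal argument over the infinitely many coordinates is needed — the supremum metric on $\Xinf$ lets the single scalar estimate from Banach's theorem propagate uniformly in $n$.
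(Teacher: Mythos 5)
Your proof is correct and complete. Note that the paper itself does not prove this theorem: it is quoted verbatim from Secelean's article \cite{S} (Theorem 3.1 there), so there is no in-paper argument to compare against. Your route --- reduce to the Banach principle for $f_s$, use the a priori estimate $d(f_s^k(x_i),x_*)\leq \frac{L^k}{1-L}d(f_s(x_i),x_i)$ uniformly in $i$, check that $(f_s^k(x_n))_n$ stays in $\Xinf$, and apply the Lipschitz property of $f$ once more to pass from the shifted sequence to $y_k$ --- is the standard one and is essentially Secelean's argument specialized from his more general $\varphi$-contraction setting to the Lipschitz case; you correctly flag the only delicate point, namely the finiteness of $\sup_i d(f_s(x_i),x_i)$ and the membership of $(f_s^k(x_n))_n$ in $\Xinf$, which is exactly where boundedness of $(x_n)$ is used.
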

\begin{remark}\emph{In fact, Secelean formulated his result in a more general way. Firstly, he considered also weaker contractive conditions and secondly, he studied also mappings defined on a finite product of spaces. However, the idea of dealing with weaker contractive conditions is relatively similar (but much more technically complicated), and also we will not be interested in the case of finite products here.}
\end{remark}
\begin{remark}\emph{
Theorem \ref{secelean} can be viewed as a generalization of the Banach fixed point theorem or Theorem~\ref{newf1}. However, it seems that the iteration procedure (\ref{ciagseceleana}) is not a very natural counterpart of (\ref{proceduraiteracyjna}). It~is~rather closer to iterating map $f_s$. 
}\end{remark}

We are going to show that under more restrictive (yet still natural) contractive conditions, we can obtain a stronger thesis. In particular, our result will imply the whole Theorem \ref{newf1}. Also, we will present examples that our assumptions are essential for the thesis and, in particular, that Theorem \ref{secelean} is too weak to obtain our assertion.\\
Finally, we will present {an application}.

\section{Other metrics on $\ell_\infty(X)$}
\subsection{Metrics $d_{s,(a_n)}$ and $d_{p,(a_n)}$}
Let $(X,d)$ be a metric space. We start with defining other metrics on space $\ell_\infty(X)$.
If $(a_n)$ is a sequence of reals, then set:
$$\dsupan(x,y) := \sup\{a_nd(x_n, y_n):n\in\N^*\} \;\;\textrm{ for any } x=(x_n), y=(y_n)\in \Xinf$$
and, if additionally {$a_n\geq 0$, $n\in\N^*$}, and $p\in[1,\infty)$, then set:
$$\dpan(x,y) := \left( \sum_{n=0}^{\infty} a_nd^p(x_n, y_n) \right)^{1/p} \textrm{ for any } x=(x_n), y=(y_n)\in \Xinf. $$
It turns out that under natural assumptions on a sequence $(a_n)$, functions $d_{s,(a_n)}$ and $d_{p,(a_n)}$ are metrics with good properties:


\begin{proposition}\label{ds_char}
Let $(X, d)$ be a metric space such that $X$ is not a singleton and $(a_n)$ be a sequence of reals. The~following statements are equivalent:
\begin{itemize}
\item[(i)] $d_{s, (a_n)}$ is a metric on $\Xinf$;
\item[(ii)] $a_n > 0$ for any $n\in\N^*$ and $(a_n) \in l_\infty$.
\end{itemize}
Moreover, if $(a_n)$ is as in (ii), then the convergence with respect to $d_{s, (a_n)}$ implies the convergence in the Tychonoff product topology {(when considering $\ell_\infty(X)$ as a subspace of $\prod_{k=0}^\infty X$)}.
\end{proposition}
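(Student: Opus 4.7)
The proposition has two independent claims: the equivalence (i)$\Leftrightarrow$(ii) and the implication about Tychonoff convergence. My plan is to handle each direction of the equivalence by constructing explicit witness sequences, using crucially the hypothesis that $X$ is not a singleton, and then handle the last assertion by a one-line coordinatewise argument.

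For the implication (i)$\Rightarrow$(ii), I proceed by contraposition in two cases. Pick distinct points $p,q\in X$. If some $a_n\leq 0$, consider the two sequences in $\ell_\infty(X)$ that both equal $p$ everywhere except at position $n$, where one equals $p$ and the other $q$. All terms $a_k d(x_k,y_k)$ for $k\neq n$ vanish, so $d_{s,(a_n)}(x,y)=\max\{0,a_n d(p,q)\}$, which is $0$ while $x\neq y$; this rules out $a_n\leq 0$. If $(a_n)\notin\ell_\infty$, take the constant sequences $x=(p,p,\dots)$ and $y=(q,q,\dots)$; both are bounded, yet $d_{s,(a_n)}(x,y)=d(p,q)\sup_n a_n=+\infty$, so $d_{s,(a_n)}$ fails to be real-valued on $\ell_\infty(X)$.

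For (ii)$\Rightarrow$(i), assume $a_n>0$ and $M:=\sup_n a_n<\infty$. Finiteness is the only non-routine point: for $x,y\in\ell_\infty(X)$, fix any $z\in X$ and use boundedness of $x$ and $y$ to find $D$ with $d(x_n,z),d(y_n,z)\leq D$ for all $n$; then $a_n d(x_n,y_n)\leq 2MD$, so the supremum is finite. Symmetry is immediate. Non-degeneracy follows from $a_n>0$: vanishing of the supremum forces $d(x_n,y_n)=0$ for every $n$. The triangle inequality is the standard one-line estimate
\[
a_n d(x_n,z_n)\leq a_n d(x_n,y_n)+a_n d(y_n,z_n)\leq d_{s,(a_n)}(x,y)+d_{s,(a_n)}(y,z),
\]
followed by taking the supremum over $n$. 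Finally, for the \emph{Moreover} part: if $d_{s,(a_n)}(x^{(k)},x)\to 0$, then for every fixed $n$ we have $a_n d(x_n^{(k)},x_n)\to 0$, and since $a_n>0$ this gives $d(x_n^{(k)},x_n)\to 0$, which is exactly convergence in the Tychonoff product topology.

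\textbf{Main obstacle.} None of the steps is deep; the only place requiring a small trick is the finiteness of $d_{s,(a_n)}$ on $\ell_\infty(X)$, where one must translate the definition of $\ell_\infty(X)$ (boundedness of each individual sequence) together with the uniform bound $M$ on $(a_n)$ into a joint bound on $a_n d(x_n,y_n)$, via a common reference point $z\in X$. The only conceptual subtlety in the whole argument is remembering to use the assumption that $X$ is not a singleton in both parts of (i)$\Rightarrow$(ii), since without two distinct points the witness sequences collapse.
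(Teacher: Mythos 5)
Your proof is correct and follows essentially the same route as the paper: the same two witness constructions (a sequence differing at one coordinate to rule out $a_n\leq 0$, and two distinct constant sequences to force boundedness of $(a_n)$) for (i)$\Rightarrow$(ii), and the same coordinatewise estimate $a_i d(x^k_i,x_i)\leq d_{s,(a_n)}(x^k,x)$ for the Tychonoff statement. The only difference is that you write out the verification of (ii)$\Rightarrow$(i), including the finiteness argument via a common reference point, which the paper explicitly leaves to the reader.
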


\begin{proof}
$(i) \Rightarrow (ii)$: Suppose, on the contrary, that $a_p \leq 0$ for some $p\in\N^*$. By hypothesis, there exist $x,y\in X$ such that $x \neq y$. Define
$\textbf{x}:= (x, x, ...)$ and $\textbf{y}=(x, ..., x, y, x, ...)$, where the $p$-th coordinate of \textbf{y} is equal to $y$. Then
$$0 < d_{s, (a_n)}(\textbf{x}, \textbf{y}) = \max\{0, a_pd(x,y)\} = 0,$$
which yields a contradiction. Thus $a_n > 0 $ for any $n\in\N^*$. \\
We show that $(a_n) \in l_\infty$.  Take again $x,y\in X$ with $x \neq y$ and define $\textbf{x}:= (x, x, ...)$ and $\textbf{y}=(y,y, ...)$.
Then $d_{s, (a_n)}(\textbf{x}, \textbf{y}) = \sup_{n\in\N^*} a_n d(x,y) = d(x,y) \sup_{n\in\N^*} a_n$, 
so 
$\sup_{n\in\N^*} a_n  = \frac{d_{s, (a_n)}(\textbf{x}, \textbf{y})}{d(x,y)} < \infty$. Thus $(a_n)$ is bounded.

The proof of $(ii) \Rightarrow (i)$ is standard and we leave it to the reader. 

Now assume that $d_{s, (a_n)}(x^k, {x}) \to 0$, where $x^k = (x^k_i)_{i\in\N^*}$ and ${x} = (x_i)_{i\in\N^*}$. Then for any $i\in\N^*$, 
$$0 \leq a_i d(x^k_i, x_i) \leq d_{s, (a_n)}(x^k, \textbf{x}),$$
which implies that $\lim_{k\to \infty} d(x^k_i, x_i) = 0$, i.e., $(x^k)$ converges to ${x}$ in the Tychonoff topology.
\end{proof}

\begin{proposition}\label{ds_top}
Let $(X, d)$ be a metric space such that $X$ is not a singleton and $(a_n)$ be a bounded sequence of positive reals. Let $\tau_{_T}$ denote the Tychonoff product topology on $\Xinf$ and $\tau_{d_{s, (a_n)}}$ be the topology induced by metric $d_{s, (a_n)}$. The~following statements are equivalent:
\begin{itemize}
\item[(i)] $\tau_{_T} = \tau_{d_{s, (a_n)}}$;
\item[(ii)] $(a_n) \in c_0$ and $(X,d)$ is bounded.
\end{itemize}
\end{proposition}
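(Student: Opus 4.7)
My plan starts with the observation that Proposition~\ref{ds_char} already supplies one direction of comparison: since $\dsupan$-convergence implies Tychonoff convergence, the identity map $(\Xinf,\tau_{\dsupan})\to(\Xinf,\tau_{_T})$ is sequentially continuous on a first countable (in fact metric) domain, hence continuous, so $\tau_{_T}\subseteq\tau_{\dsupan}$. The proof of the equivalence therefore reduces to identifying exactly when the reverse inclusion holds, i.e., when every open $\dsupan$-ball at $x$ contains a basic Tychonoff neighborhood of $x$.

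For $(ii)\Rightarrow(i)$, I would assume $(a_n)\in c_0$ and set $M:=\on{diam}(X)<\infty$. Given $x\in\Xinf$ and $\vp>0$, pick $N$ with $a_iM<\vp/2$ for $i\geq N$ (using $a_n\to 0$), and then $\delta>0$ small enough that $a_i\delta<\vp/2$ for every $i<N$. The Tychonoff-open set $U:=\{y\in\Xinf:d(x_i,y_i)<\delta\text{ for }i<N\}$ is a neighborhood of $x$, and splitting the supremum defining $\dsupan(x,y)$ at index $N$ shows $U\subseteq B_{\dsupan}(x,\vp)$: for $i<N$ one has $a_id(x_i,y_i)<a_i\delta<\vp/2$, and for $i\geq N$ one has $a_id(x_i,y_i)\leq a_iM<\vp/2$.

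For $(i)\Rightarrow(ii)$ I would argue by contraposition, in each failure case producing a sequence that converges in $\tau_{_T}$ but not in $\dsupan$; since convergence is a topological invariant, this breaks the equality. If $X$ is unbounded, fix $p\in X$, choose $w_k\in X$ with $d(w_k,p)\geq 1/a_k$, and let $x^k$ equal $p$ in every coordinate except the $k$-th, where it equals $w_k$. Then $x^k\in\Xinf$ (it differs from $p$ in one coordinate only), the constant sequence $x:=(p,p,\ldots)$ is its pointwise and hence Tychonoff limit, yet $\dsupan(x^k,x)\geq a_kd(w_k,p)\geq 1$. If instead $X$ is bounded but $(a_n)\notin c_0$, fix $\delta>0$ and indices $n_0<n_1<\ldots$ with $a_{n_k}\geq\delta$, fix distinct $p,q\in X$, and let $x^k$ equal $q$ at position $n_k$ and $p$ elsewhere; once more $x^k\to x:=(p,p,\ldots)$ in $\tau_{_T}$, while $\dsupan(x^k,x)=a_{n_k}d(p,q)\geq\delta\,d(p,q)$.

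The main delicate point I anticipate is the $(ii)\Rightarrow(i)$ estimate, where the two hypotheses must cooperate inside a single supremum: without $(a_n)\in c_0$ the tail $i\geq N$ cannot be controlled uniformly in $y$, and without boundedness of $X$ the bound $a_iM$ is vacuous. The two counterexamples for $(i)\Rightarrow(ii)$ are then naturally tailored to expose each deficiency separately.
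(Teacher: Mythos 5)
Your proof is correct and follows essentially the same strategy as the paper: for $(i)\Rightarrow(ii)$ both arguments exhibit sequences converging in $\tau_{_T}$ but not in $\dsupan$ (your single-coordinate perturbations are a slightly cleaner variant of the paper's tail-modification sequences), and for $(ii)\Rightarrow(i)$ both split the supremum at an index $N$ chosen via $a_n\to 0$ and $\on{diam}(X)<\infty$, with your version phrased through neighborhoods rather than the paper's sequential-convergence formulation. No gaps.
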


\begin{proof}
$(i) \Rightarrow (ii)$: Suppose, on the contrary, that $(a_n) \notin c_0$. Then there exist $\varepsilon_0 > 0$ and a subsequence $(a_{n_j})$ such that $a_{n_j} \geq \varepsilon_0$ for any $j\in\N^*$. Take $x,y\in X$ with $x \neq y$, and define $\textbf{x} = (x, x, ...)$ and $x^k = (x^k_i)_{i\in\N^*}$, where 
 \begin{equation*}x^k_i :=\left\{\begin{array}{ccc} x&\mbox{if}&i\leq k,\\y&\mbox{if}&i>k.\end{array}\right.\end{equation*}
Clearly, $(x^k)$ converges to $\textbf{x}$ in $(X, \tau_{_T})$, so by (i), $d_{s, (a_n)} (x^k, \textbf{x}) \to 0$. On the other hand,
$$d_{s, (a_n)}(x^k, \textbf{x}) \geq \sup_{j\in\N^*} a_{n_j} d(x^k_{n_j}, x) \geq \varepsilon_0 d(x,y),$$
so letting $k$ tend to $\infty$, we obtain $0 \geq \varepsilon_0 d(x,y) > 0$, a contradiction. Thus $(a_n) \in c_0$. \\
Now, suppose that $(X, d)$ is unbounded. Then there exists a sequence $(x_k)$ such that $d(x_k, x_0) > \frac{1}{a_{k}}$ for any $k\in\N$. Set $\textbf{x} := (x_0, x_0, ...)$ and $x^k := (x^k_i)_{i\in\N^*}$, where 
\begin{equation*}x^k_i :=\left\{\begin{array}{ccc} x_0 &\mbox{if}& i\leq k,\\x_k&\mbox{if}&i>k.\end{array}\right.\end{equation*}
Then $(x^k)$ converges to $\textbf{x}$ in $(X, \tau_{_T})$, so by (i), $d_{s, (a_n)} (x^k, \textbf{x}) \to 0$. However, if $k\geq 1$, then
$$d_{s, (a_n)} (x^k, \textbf{x}) = \sup_{j \geq k+1} a_j d(x_j, x_0) \geq a_{k+1} d(x_{k+1}, x_0) > 1,$$
which yields a contradiction.

$(ii) \Rightarrow (i)$: By the last part of Proposition~\ref{ds_char}, it suffices to show that the convergence in $(X, \tau_{_T})$ implies the convergence with respect to $d_{s, (a_n)}$. Assume that $x^k \stackrel{\tau_{_T}}{\to} {x}$, where $x^k = (x^k_i)_{i\in\N^*}$ and ${x} = (x_i)_{i\in\N^*}$. That means $\lim_{k\to\infty} d(x^k_i, x_i) = 0$ for any $i\in\N^*$. Fix $\varepsilon>0$. Since $a_n \to 0$, there is $p \in \N^*$ such that for $i>p$, $a_i < \frac{\varepsilon}{\on{diam}X}$. Then $a_id(x^k_i, x_i) < \varepsilon$ for $i>p$ and $k\in\N^*$. Since $\lim_{k\to\infty} d(x^k_i, x_i) = 0$ for $i=0,1, ..., p$, there is $j\in\N^*$ such that for $k\geq j$ and $i=0,...,p$, $d(x^k_i, x_i) < \frac{\varepsilon}{a_i}$. Then for $k \geq j$, $d_{s, (a_n)}(x^k, {x}) \leq \varepsilon$. Thus we get that $d_{s, (a_n)} (x^k ,{x}) \to 0$.
\end{proof}

Using a similar argument as in the proofs of Propositions~\ref{ds_char} and ~\ref{ds_top}, it is possible to prove the following two results for metrics $d_{p, (a_n)}$.

\begin{proposition}\label{dp_char}
Let $(X, d)$ be a metric space such that $X$ is not a singleton, $(a_n)$ be a sequence of {nonnegative} reals and $p\in [1,\infty)$. The following statements are equivalent:
\begin{itemize}
\item[(i)] $d_{p, (a_n)}$ is a metric on $\Xinf$;
\item[(ii)] $a_n > 0$ for any $n\in\N^*$ and $(a_n) \in l_1$.
\end{itemize}
Moreover, if $(a_n)$ is as in (ii), then the convergence with respect to $d_{p, (a_n)}$ implies the convergence in the Tychonoff product topology.
\end{proposition}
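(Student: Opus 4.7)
The plan is to mirror, step by step, the proof of Proposition~\ref{ds_char}, replacing suprema by weighted $\ell_p$-sums and boundedness of $(a_n)$ by summability.

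For the implication $(i) \Rightarrow (ii)$, I would pick distinct points $x, y \in X$ (available since $X$ is not a singleton) and use two test pairs of sequences. First, to force strict positivity of every coefficient, fix an index $k \in \N^*$ and compare $\mathbf{x} = (x, x, \ldots)$ with the sequence $\mathbf{y}$ obtained from $\mathbf{x}$ by replacing the $k$-th coordinate by $y$. Since the $a_n$ are already assumed nonnegative, the only alternative to $a_k > 0$ is $a_k = 0$, which would give $d_{p,(a_n)}(\mathbf{x}, \mathbf{y}) = 0$ although $\mathbf{x} \neq \mathbf{y}$, a contradiction. Second, to force $(a_n) \in \ell_1$, compare $\mathbf{x} = (x, x, \ldots)$ with $\mathbf{y} = (y, y, \ldots)$, both of which lie in $\Xinf$, so that
$$d_{p,(a_n)}(\mathbf{x}, \mathbf{y})^p = d(x, y)^p \sum_{n=0}^\infty a_n$$
is finite only when $\sum_n a_n < \infty$.

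For the converse $(ii) \Rightarrow (i)$, I would first check that $d_{p,(a_n)}$ takes finite values on $\Xinf \times \Xinf$: any two bounded sequences satisfy $d(x_n, y_n) \leq M$ for some constant $M$, hence the defining sum is bounded by $M^p \sum_n a_n$. Symmetry and identity-of-indiscernibles follow from $a_n > 0$ for each $n$. The triangle inequality is the only nontrivial point: combine $d(x_n, y_n) \leq d(x_n, z_n) + d(z_n, y_n)$ with monotonicity of $t \mapsto t^p$ on $[0,\infty)$, and apply the classical Minkowski inequality to the nonnegative $\ell^p(\N^*)$-sequences $\bigl(a_n^{1/p} d(x_n, z_n)\bigr)_n$ and $\bigl(a_n^{1/p} d(z_n, y_n)\bigr)_n$.

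For the concluding "moreover" claim, the single-term estimate
$$a_i d^p(x^k_i, x_i) \leq \sum_{n=0}^\infty a_n d^p(x^k_n, x_n) = d_{p,(a_n)}(x^k, x)^p$$
together with $a_i > 0$ shows that $d_{p,(a_n)}(x^k, x) \to 0$ forces $d(x^k_i, x_i) \to 0$ for every fixed $i \in \N^*$, i.e., convergence in the Tychonoff topology. I do not expect any serious obstacle here; the only delicacy in the $(i) \Rightarrow (ii)$ direction is recognising that "$d_{p,(a_n)}$ is a metric on $\Xinf$" implicitly demands that the defining series be convergent for every pair of bounded sequences, which is precisely what the $\mathbf{x} = (x,x,\dots), \mathbf{y} = (y,y,\dots)$ test sequence captures.
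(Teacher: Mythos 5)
Your proof is correct and follows exactly the route the paper intends: the authors omit the proof of this proposition, stating only that it follows by the same argument as Proposition~\ref{ds_char} with suprema replaced by weighted sums, and your test sequences, the Minkowski-inequality verification of the triangle inequality, and the single-term estimate for the Tychonoff claim are precisely that adaptation. No gaps.
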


\begin{proposition}\label{dp_top}
Let $(X, d)$ be a metric space such that $X$ is not a singleton and $(a_n)$ be a sequence of positive reals such that $(a_n)\in l_1$. The following statements are equivalent:
\begin{itemize}
\item[(i)] $\tau_{_T} = \tau_{d_{p, (a_n)}}$;
\item[(ii)] $(X,d)$ is bounded.
\end{itemize}
\end{proposition}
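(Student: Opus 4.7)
My plan is to follow the proof of Proposition \ref{ds_top} closely, with adjustments forced by the $p$-summable setting. A first observation is that $(a_n)\in\ell_1$ already implies $(a_n)\in c_0$, so, unlike in Proposition \ref{ds_top}, there is no separate decay condition on $(a_n)$ to extract: the only content of (ii) not already built into the standing hypotheses is the boundedness of $X$. Moreover, the last part of Proposition \ref{dp_char} already gives one half of the topological equality for free: $d_{p,(a_n)}$-convergence always implies $\tau_{_T}$-convergence.

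For (ii)$\Rightarrow$(i), I therefore only need to show the reverse inclusion. If $D:=\on{diam}(X)<\infty$ and $x^k=(x^k_i)\to x=(x_i)$ in $\tau_{_T}$, then for every $i$,
\begin{equation*}
a_i\, d^p(x^k_i,x_i)\;\leq\; a_i D^p.
\end{equation*}
The majorant $(a_iD^p)$ is summable by the $\ell_1$ hypothesis, and for each fixed $i$ the terms $a_i d^p(x^k_i,x_i)$ tend to $0$ as $k\to\infty$. The dominated convergence theorem (over $\N^*$ with counting measure) then yields $\sum_i a_i d^p(x^k_i,x_i)\to 0$, i.e., $d_{p,(a_n)}(x^k,x)\to 0$.

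For (i)$\Rightarrow$(ii), I argue by contraposition. Assume $X$ is unbounded and fix $x_0\in X$. Set $s_k:=\sum_{i>k}a_i$; positivity and summability of $(a_i)$ give $s_k>0$ and $s_k\to 0$. Using unboundedness, pick $x_k\in X$ with $d^p(x_k,x_0)>1/s_k$. Put $\mathbf{x}:=(x_0,x_0,\ldots)$ and
\begin{equation*}
x^k_i\;:=\;\left\{\begin{array}{ccc} x_0 & \mbox{if} & i\leq k,\\ x_k & \mbox{if} & i>k.\end{array}\right.
\end{equation*}
Each $x^k$ lies in $\ell_\infty(X)$ (only two values are used) and $x^k\to\mathbf{x}$ in $\tau_{_T}$ (for fixed $i$, eventually $x^k_i=x_0$), yet
\begin{equation*}
d_{p,(a_n)}(x^k,\mathbf{x})^p \;=\; \sum_{i>k} a_i\, d^p(x_k,x_0) \;=\; s_k\cdot d^p(x_k,x_0) \;>\; 1,
\end{equation*}
contradicting (i).

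I do not expect a serious obstacle: the argument parallels Proposition \ref{ds_top} almost verbatim, and the dominated convergence step is routine. The only genuinely new point is that, in the $p$-summable setting, the counterexample sequence $x^k$ must be calibrated against the tail sum $s_k$ rather than against a single weight $a_{k+1}$ as in the $d_s$ case, because every coordinate $i>k$ contributes to the $p$-norm simultaneously; this is where the $\ell_1$ information (and not merely $c_0$) is used.
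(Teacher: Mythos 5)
Your proof is correct, and it is essentially the adaptation of the arguments for Propositions \ref{ds_char} and \ref{ds_top} that the paper itself invokes (the paper only states that the $d_{p,(a_n)}$ case follows by ``a similar argument'' and gives no details). The two genuine adjustments you make are exactly the right ones: packaging the (ii)$\Rightarrow$(i) direction as dominated convergence with majorant $(a_iD^p)$ instead of the explicit tail-plus-finitely-many-coordinates estimate used for $d_{s,(a_n)}$, and calibrating the counterexample in (i)$\Rightarrow$(ii) against the tail sum $s_k=\sum_{i>k}a_i$ rather than a single weight, which is precisely where summability (and not just $a_n\to 0$) enters.
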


{In what follows, when writing $d_{s,(a_n)}$ (or $d_{p,(a_n)}$) we automatically assume that $(a_n)$ is chosen so that $d_{s,(a_n)}$ (or $d_{p,(a_n)}$) is a metric.}\\
A natural question arises whether these metrics are complete if $d$ is so. Clearly, if $a_n=1$ for all $n\in\N^*$, then $d_{s,(a_n)}$ is exactly the metric $d_s$ considered by Secelean, so it is complete. Also, if $\inf\{a_n:n\in\N^*\}>0$, then the metrics $d_{s,(a_n)}$ and $d_s$ are Lipschitz equivalent, hence $d_{s,(a_n)}$ is also  complete.\\
The following example shows that the answer can be negative if $a_n\to 0$.
\begin{example}\emph{
Let $(X,d):=(\R,|\cdot|)$ and for every $k\in\N^*$, let $x^k:=(0,1,..,k,0,0,...)$. Then:\\
- $\dsupan(x^k,x^{k+1})=(k+1)a_{k+1}$, so if $\sum (k+1)a_{k+1}<\infty$, then $(x^k)$ is Cauchy in $\dsupan$;\\
- if $p\geq 1$, then $\dpan(x^k,x^{k+1})=(k+1)a_{k+1}^{1/p}$, so if $\sum (k+1)a_{k+1}^{1/p}<\infty$, then $(x^k)$ is Cauchy in $\dpan$.\\
On the other hand, $(x^k)$ cannot be convergent since, by Propositions \ref{ds_char} and \ref{dp_char}, convergence in any of metrics $\dsupan,\dpan$ implies the convergence of each coordinate.}
\end{example}


\begin{corollary}\label{newf2}Assume that $a_n\to 0$.\\
(1) If $(X,d)$ is bounded and complete, then $\Xinf$ is complete with respect to any of metrics $\dsupan,\dpan$.\\
(2) If $(X,d)$ is complete and $(x^k)=((x^k_i)_{i\in\N^*})$ is a Cauchy sequence in $\Xinf$ (with respect to any of metrics $\dsupan,\dpan$) such that the set $\{x^k_i: i,k\in\N^*\}$ is bounded in $X$, then $(x^k)$ is convergent to $x=(x_i)$, where $x_i=\lim_{k\to\infty}x^k_i$, $i\in\N^*$.\\
(3) If $x^k=(x^k_i)_{i\in\N^*}$, $k\in\N^*$, and $x=(x_i)$ are elements of $\Xinf$ such that the set $\{x^k_i: i,k\in\N^*\}$ is bounded, then {$x^k\overset{d'}\to x$ iff $x^k\to x$ with respect to the Tychonoff topology on $\Xinf$,} where $d'$ is any of metrics $\dsupan,\dpan$.\\
(4) If $(X,d)$ is compact, then $\Xinf$ is compact with respect to any of metrics $\dsupan,\dpan$.
\end{corollary}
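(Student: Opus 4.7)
I would establish (2) first as the main technical step, then deduce (1) and the nontrivial direction of (3) directly from it, and finally obtain (4) from Propositions \ref{ds_top} and \ref{dp_top} combined with Tychonoff's theorem.

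For (2), fix a Cauchy sequence $(x^k)=((x^k_i)_{i\in\N^*})$ in $(\Xinf,d')$ with $d'\in\{\dsupan,\dpan\}$ and bounded range. For each fixed $i$, the inequalities
$$a_i d(x^k_i,x^j_i)\le \dsupan(x^k,x^j) \quad\text{and}\quad a_i d(x^k_i,x^j_i)^p\le \dpan(x^k,x^j)^p,$$
together with $a_i>0$, make $(x^k_i)_k$ Cauchy in $X$; by completeness it converges to some $x_i\in X$. Boundedness of $\{x^k_i:i,k\in\N^*\}$ together with continuity of $d$ yields $(x_i)\in\Xinf$. To show $d'(x^k,x)\to 0$, I would split the supremum or the series at a cutoff $N=N(\vp)$: for $n>N$ the decay of $(a_n)$ (either $a_n\to 0$ in the sup case, or $(a_n)\in\ell_1$ in the $\ell_p$ case, the latter being forced by Proposition \ref{dp_char}) together with the uniform bound on $\{x^k_i\}$ keeps the tail small uniformly in $k$; for $n\le N$ only finitely many coordinate convergences are needed, which have already been established.

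Part (1) is then immediate from (2), as boundedness of $X$ forces the range of any sequence in $\Xinf$ to be automatically bounded. For (3), the implication ``$d'$-convergence $\Rightarrow$ Tychonoff convergence'' is already contained in the last assertions of Propositions \ref{ds_char} and \ref{dp_char}, while the converse is proved by the same cutoff argument as in (2)---the boundedness hypothesis on $\{x^k_i\}$ is precisely what enables the uniform tail estimate.

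For (4), compactness of $X$ entails boundedness, so $\Xinf=\prod_{k=0}^\infty X$; moreover the standing assumption $a_n\to 0$ (automatic when $(a_n)\in\ell_1$) places us in the setting of Propositions \ref{ds_top} and \ref{dp_top}, giving $\tau_{_T}=\tau_{d'}$ on $\Xinf$. Tychonoff's theorem then yields compactness in $\tau_{_T}$, and hence in $d'$. The only slightly delicate point in the whole argument is the uniformity in $k$ of the tail estimate in (2); this is essentially the same technique already used in the proofs of Propositions \ref{ds_top} and \ref{dp_top}, so no genuinely new idea is required.
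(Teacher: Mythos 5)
Your argument is correct, but it is organized differently from the paper's. The paper proves part (1) first: for a Cauchy sequence in the bounded case, each coordinate sequence $(x^k_i)_{k}$ is Cauchy in $X$, hence convergent, and then Propositions \ref{ds_top} and \ref{dp_top} (equality of $\tau_{d'}$ with the Tychonoff topology when $X$ is bounded) are invoked as a black box to upgrade coordinatewise convergence to $d'$-convergence. Parts (2) and (3) are then reduced to (1) by passing to the subspace $\ell_\infty(Y)\subset\Xinf$ with $Y:=\overline{\{x^k_i:i,k\in\N^*\}}$, which is bounded and complete, so that no new estimate is needed. You instead take (2) as the primitive statement and prove it by an explicit cutoff argument --- uniform tail control from $a_n\to 0$ (resp.\ $(a_n)\in\ell_1$) plus the uniform bound on $\{x^k_i\}$, and finitely many coordinate convergences for the head --- and then deduce (1) as the special case where boundedness is automatic; your treatment of (3) and (4) matches the paper's. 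The trade-off is clear: your route is self-contained and makes the role of the boundedness hypothesis explicit (as you note, the uniformity in $k$ of the tail estimate is the only delicate point, and you handle it correctly), whereas the paper's subspace trick is shorter because it recycles the already-proved topological equivalence instead of re-deriving the $\vp$-estimate. Both are complete proofs; no gap.
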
  
\begin{proof} 
(1). If $(x^k)=((x^k_i)_{i\in\N^*})$ is a Cauchy sequence in $\Xinf$, then each $(x^k_i)_{k\in\N^*}$ is Cauchy in $(X,d)$, hence convergent to some $x_i\in X$. Then by Propositions \ref{ds_top} and \ref{dp_top}, $x^k\to (x_i)$ with respect to any of metrics $d_{s,(a_n)},d_{p,(a_n)}$.\\
 (2) follows from (1) used for the subspace $\ell_\infty(Y)\subset \Xinf$, where $Y:=\overline{\{x^k_i: i,k\in\N^*\}}$.\\
(3) follows from Propositions \ref{ds_top} and \ref{dp_top} and (1), used for $Y:=\overline{\{x^k_i: i,k\in\N^*\}}$.\\
(4) is a direct consequence of Propositions \ref{ds_top} and \ref{dp_top}.
\end{proof}
\begin{remark}\emph{
It is worth to remark that the definitions of metrics $d_{s,(a_n)}$ and $d_{p,(a_n)}$ base on the same ideas as definitions of weighted $L_p$-sum of spaces considered in functional analysis (see for example \cite{G}). However, our setting is strictly metric.}
\end{remark}

\subsection{Particular versions of metrics $d_{s,(a_n)}$ and $d_{p,(a_n)}$: metrics $d_{s,q}$ and $d_{p,q}$}

From now on we will assume that $(a_n)$ is a geometric sequence $(q^n)$ for $q\in (0,1]$. As we will show, the obtained {results in such a case} imply corresponding results for the general case of $(a_n)$.\\ {For $q\in (0,1]$, denote $d_{s,q}:=d_{s,(q^n)}$, that is,
\begin{equation*}d_{s, q}(x,y) := \sup\{q^n d(x_n, y_n):n\in\N^*\} \;\;\textrm{ for any } x=(x_n), y=(y_n)\in \Xinf.
\end{equation*}
By Proposition \ref{ds_char}, $d_{s,q}$ is a metric. Observe that in this notation, the supremum metric $d_s$ is exactly the metric $d_{s,1}$.\\
If additionally $q<1$ and $p\in[1,\infty)$, denote $d_{p,q}:=d_{p,(q^n)}$, that is,
\begin{equation*}d_{p, q}(x,y) := \left( \sum_{n=0}^{\infty} q^n d^p(x_n, y_n) \right)^{1/p} \textrm{ for any } x=(x_n), y=(y_n)\in \Xinf. \end{equation*}
By Proposition \ref{dp_char}, $d_{p,q}$ is a metric.\\}
The following result shows some connections between $d_{s,q}$ and $d_{p,q}$.

\begin{proposition}\label{porownanie}
In the above frame, { assume that $q<1$ and $p\geq 1$. Then the following statements hold:}
\begin{itemize}
\item[(i)] $d_{s,q}\leq d_{p,q^p}$;
\item[(ii)] if $q\leq q'\leq 1$, then $d_{s,q}\leq d_{s,q'}$;
\item[(iii)] if $q^{{1}/{p}}<q'\leq 1$, then $d_{p,q} \leq \left(1-\frac{q}{(q')^p}\right)^{-1/p} d_{s, q'}$;
\item[(iv)] for every $x,y\in\Xinf$, $\lim_{p'\to\infty} d_{p',q}(x,y) = d_{s,1}(x,y)$.
\end{itemize}
\end{proposition}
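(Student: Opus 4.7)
My plan is to treat the four claims separately, each reducing to a short scalar estimate on the factors $q^n d(x_n,y_n)$.

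Parts (i) and (ii) are essentially bookkeeping. For (i), after rewriting
\[
d_{p,q^p}(x,y)^p \;=\; \sum_{n=0}^\infty \bigl(q^n d(x_n,y_n)\bigr)^p,
\]
the inequality reduces to the elementary fact that $\sup_n a_n \le \bigl(\sum_n a_n^p\bigr)^{1/p}$ for any nonnegative scalar sequence $(a_n)$, applied with $a_n := q^n d(x_n,y_n)$. For (ii), the termwise inequality $q^n \le (q')^n$ (valid whenever $0 < q \le q'$) followed by taking the supremum gives the claim immediately.

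The main substantive estimate is (iii). I would factor the summand as
\[
q^n d(x_n,y_n)^p \;=\; \left(\frac{q}{(q')^p}\right)^{\!n} \bigl((q')^n d(x_n,y_n)\bigr)^p,
\]
then bound each factor $\bigl((q')^n d(x_n,y_n)\bigr)^p$ uniformly by $d_{s,q'}(x,y)^p$, and sum the resulting geometric series. The hypothesis $q^{1/p} < q'$ is exactly what ensures $q/(q')^p < 1$, so the series converges to $\bigl(1 - q/(q')^p\bigr)^{-1}$; taking $p$-th roots yields the asserted Lipschitz constant.

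For (iv), I would squeeze $d_{p',q}(x,y)$ between two quantities both tending to $M := d_{s,1}(x,y) = \sup_n d(x_n,y_n)$, which is finite since $x,y \in \Xinf$. The upper bound comes from replacing every $d(x_n,y_n)$ by $M$ and summing the geometric series in $q^n$, giving $d_{p',q}(x,y) \le M(1-q)^{-1/p'} \longrightarrow M$ as $p'\to\infty$. For the lower bound, given $\varepsilon > 0$, I would pick a single coordinate $n_0$ with $d(x_{n_0},y_{n_0}) > M - \varepsilon$ and retain only the $n_0$-th term to obtain $d_{p',q}(x,y) \ge q^{n_0/p'}(M-\varepsilon) \longrightarrow M - \varepsilon$, since $q^{n_0/p'} \to 1$. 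The only mild obstacle is the degenerate case $M=0$, which is trivial; letting $\varepsilon \to 0^+$ closes the squeeze.
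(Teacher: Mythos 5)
Your argument is correct and follows essentially the same route as the paper's proof: the identical termwise factorization and geometric-series bound for (iii), and the same squeeze for (iv) (upper bound via (iii) with $q'=1$, lower bound by isolating a single near-supremal coordinate and using $q^{n_0/p'}\to 1$). The only cosmetic difference is in (i), where you invoke the general scalar inequality $\sup_n a_n \le \bigl(\sum_n a_n^p\bigr)^{1/p}$ instead of the paper's observation that the supremum is attained; this is, if anything, slightly cleaner.
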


\begin{proof}
Let $x = (x_n), y = (y_n) \in \Xinf$.\\
We prove (i). Since $q^n \to 0$ and $(d(x_n, y_n))$ is bounded, we have for some $k_0\in\N^*$: 
$$d_{s,q}(x,y) = \sup_{n\in\N^*} q^n d(x_n, y_n) = q^{k_0} d(x_{k_0}, y_{k_0}) = \left(q^{pk_0} d^p(x_{k_0}, y_{k_0}) \right)^{1/p} \leq \left(\sum_{n=0}^\infty (q^p)^n d^p(x_n, y_n) \right)^{1/p} = d_{p, q^p}(x,y).$$
(ii) follows from the fact that for any $n\in\N^*$, $q^n d(x_n, y_n) \leq (q')^n d(x_n, y_n)$.\\
(iii) follows from
$$d_{p,q}(x,y) = \left(\sum_{n=0}^\infty q^n d^p(x_n, y_n) \right)^{1/p} = \left(\sum_{n=0}^\infty \frac{q^n}{(q')^{pn}} \big((q')^n d(x_n, y_n)\big)^p \right)^{1/p}  $$
$$\leq \left(\sum_{n=0}^\infty \left(\frac{q}{(q')^p}\right)^n \right)^{1/p} d_{s,q'}(x,y) = \left(1-\frac{q}{(q')^p}\right)^{-1/p} d_{s,q'}(x,y).$$
We prove (iv).  Let $\varepsilon>0$. Then there exists $k_0\in\N^*$ such that:
$$d_{s,1}(x,y) \leq  d(x_{k_0}, y_{k_0})+\varepsilon = \frac{1}{q^{k_0/p}} \left(q^{k_0} d^p(x_{k_0}, y_{k_0}) \right)^{1/p} +\varepsilon\leq \frac{1}{q^{k_0/p}} d_{p, q} (x,y)+\varepsilon.$$
Hence, 
$q^{k_0/p} d_{s,1}(x,y) \leq d_{p,q}(x,y) +\varepsilon$ and 
therefore:
$d_{s,1}(x,y) \leq \liminf_{p\to\infty} d_{p,q}(x,y)+\varepsilon$. Since $\varepsilon>0$ was arbitrary,
we have $d_{s,1}(x,y) \leq \liminf_{p\to\infty} d_{p,q}(x,y)$. On the other hand, (iii) (with $q'=1$) implies that
$ \limsup_{p\to\infty} d_{p,q}(x,y) \leq d_{s,1}(x,y).$
Thus we arrive to the desired equality.
\end{proof}
{By the previous section, if $X$ is bounded, then all metrics $d_{s,q}$ and $d_{p,q}$ are equivalent (and generate the Tychonoff topology on $\ell_\infty(X)=\prod_{k=0}^\infty X$). 
In general, this is not the case. For example, $d_{s,q}$ and $d_{p,q^p}$ need not be equivalent (recall point (i) of the above proposition), as the next example shows:
\begin{example}\emph{
Let $q \in (0,1)$, $(\R, |\cdot|)$ be the Euclidean space and $p\geq 1$. For $k\in\N$, let $x^k=(x^k_i)_{i\in\N^*}$ be defined by
\begin{equation*}
x^k_i:=\left\{\begin{array}{ccc}\frac{1}{(k+1)^{1/p}q^i}&\mbox{if}&i\leq k,\\0&\mbox{if}&i>k.\end{array}\right.
\end{equation*}
Then $x^k\to(0)$, the zero sequence, with respect to $d_{s,q}$, but does not converge with respect to $d_{p,q^p}$. Indeed, for every $k\in\N^*$,
\begin{equation*}
d_{s,q}(x^k,(0))= \sup\{ q^n d(x^k_n, 0):n\in\N^*\}=\frac{1}{(k+1)^{1/p}},
\end{equation*}
but
\begin{equation}
d_{p,q^p}(x^k, (0)) = \left(\sum_{n=0}^\infty (q^p)^n d^p(x_n^k, 0)\right)^{1/p} =
\left(\sum_{n=0}^k \frac{1}{(k+1)}\right)^{1/p}=1.
\end{equation}
}
\end{example}}
\section{Main results}
\subsection{Sequences of generalized iterates and a selfmap of $\Xinf$}
For any mapping $f:\Xinf\to X$, define $\tilde{f}:\Xinf\to\Xinf$ as follows:
\begin{equation*}
\tilde{f}((x_n)) = (f((x_n)), x_0, x_1, ...) \textrm{ for any } (x_n)\in\Xinf.
\end{equation*}
Now if $x:=(x_n)\in\Xinf$, then we set $\tilde{x}^0:=x$ and 
\begin{equation*}
x^1:=f(\tilde{x}^0)\;\;\mbox{and}\;\;\tilde{x}^1 := \tilde{f}(\tilde{x}^0).
\end{equation*} 
Assume that for some $k\in\N$, we defined $x^i\in X ,\;\mbox{and}\; \tilde{x}^i\in\Xinf$ for $i\in\{1, ...,k\}$. Then set 
\begin{equation*}
x^{k+1}:=f(\tilde{x}^k)\;\;\mbox{and}\;\;\tilde{x}^{k+1}:=\tilde{f}(\tilde{x}^k).
\end{equation*}
In this way we defined sequences $(x^k)\subset X$ and $(\tilde{x}^k)\subset\Xinf$. Observe that
for every $k\in\N$,
\begin{equation}\label{filip3}
\tilde{x}^{k}=(f(\tilde{x}^{k-1}), ..., f(\tilde{x}^0), x_0, x_1, ...)=(x^{k},...,x^1,x_0,x_1,...).
\end{equation}
Clearly, $(\tilde{x}^k)$ is the sequence of {iterates} of $x=(x_n)\in\Xinf$ of mapping $\tilde{f}$. We will say that $(x^k)$ is \emph{the sequence of generalized iterates of function $f$ at $x$.}\\
Recall that $x_*\in X$ is a generalized fixed point of $f$, if
\begin{equation*}
f(x_*,x_*,...)=x_*.
\end{equation*}
\begin{definition}\emph{
A generalized fixed point $x_*\in X$ of map $f:\Xinf\to X$ is called a }generalized contractive fixed point \emph{ (GCFP), if for every $x\in\Xinf$, the sequence $(x^k)$ of generalized iterates converges to $x_*$.}
\end{definition}
The above definition is a counterpart of the notion of a contractive fixed point of a selfmap of a metric space introduced by Leader and Hoyle \cite{LH}:\\ if $g:Y\to Y$, then the fixed point $y_*\in Y$ of $g$ is called \emph{a contractive fixed point} (CFP), if for every $y\in Y$, the sequence of iterates $(g^k(y))$ converges to $y_*$.\\
We will show that the existence of a GCFP of $f$ is strongly related to the existence of a CFP of $\tilde{f}$. We start with the lemma { which follows directly from (\ref{filip3}): 
\begin{lemma}\label{filip2}In the above frame let $x=(x_n)\in\Xinf$.
\begin{itemize}
\item[(i)] If $\tilde{x}^k\to\mathbf{x}$ for some $\mathbf{x}\in\Xinf$ with respect to the Tychonoff topology, then $\mathbf{x}=(x,x,x,...)$ for some $x\in X$, and $x^k\to x$.
\item[(ii)] If $x^k\to x$ for some $x\in X$, then $\tilde{x}^k\to \mathbf{x}$ with respect to the Tychonoff topology, where $\mathbf{x}=(x,x,x,...)$.
\end{itemize}
\end{lemma}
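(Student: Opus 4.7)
The plan is to read off everything directly from formula~(\ref{filip3}), which gives the coordinates of $\tilde{x}^k$ explicitly. Writing $\tilde{x}^k = (\tilde{x}^k_i)_{i\in\N^*}$, we have $\tilde{x}^k_i = x^{k-i}$ whenever $0 \leq i \leq k-1$, and $\tilde{x}^k_i = x_{i-k}$ when $i \geq k$. Since convergence in the Tychonoff topology on $\Xinf$ is precisely coordinatewise convergence, both parts reduce to tracking what happens to each coordinate as $k\to\infty$.

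For (i), suppose $\tilde{x}^k \to \mathbf{x} = (y_i)_{i\in\N^*}$ in the Tychonoff topology. Fix an index $i$. For every $k > i$ we have $\tilde{x}^k_i = x^{k-i}$, so the sequence $(x^{k-i})_{k>i}$ converges to $y_i$; equivalently, $x^j \to y_i$ as $j\to\infty$. Applying this to $i=0$ first yields the existence of $x := y_0 = \lim_{j\to\infty} x^j \in X$, and then for an arbitrary $i$ the uniqueness of limits forces $y_i = x$. Therefore $\mathbf{x} = (x,x,x,\ldots)$ and $x^k\to x$, as required.

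For (ii), assume $x^k \to x$ in $(X,d)$. Fix any coordinate index $i$. Again for $k > i$ we have $\tilde{x}^k_i = x^{k-i}$, and since $x^{k-i} \to x$ as $k\to\infty$, it follows that $\tilde{x}^k_i \to x$. Thus every coordinate of $\tilde{x}^k$ tends to the corresponding coordinate of $\mathbf{x} := (x,x,x,\ldots)$, which is exactly Tychonoff convergence $\tilde{x}^k \to \mathbf{x}$.

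There is no real obstacle here: the entire content of the lemma is the bookkeeping in~(\ref{filip3}), which identifies the coordinates of $\tilde{x}^k$ as a one-step-shifted concatenation of $(x^k,\ldots,x^1)$ with the original tail $(x_0,x_1,\ldots)$. The only point worth double-checking is the direction "coordinatewise limit exists for every $i$" in part~(i), but since the same reindexed sequence $(x^j)$ appears in every coordinate, the coincidence of all $y_i$ is automatic.
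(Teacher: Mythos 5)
Your proof is correct and is exactly the argument the paper intends: the paper simply asserts that the lemma ``follows directly from (\ref{filip3})'', and your coordinatewise bookkeeping ($\tilde{x}^k_i = x^{k-i}$ for $i\le k-1$, $\tilde{x}^k_i=x_{i-k}$ for $i\ge k$, combined with the fact that Tychonoff convergence in this countable product is coordinatewise convergence) is precisely the direct verification being alluded to. No gaps.
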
  }
We are ready to state the theorem:
\begin{theorem}\label{filip4}
In the above frame,
\begin{itemize}
\item[(i)] $f$ has a GCFP iff $\tilde{f}$ has a CFP { with respect to the Tychonoff topology on $\ell_\infty(X)$};
\item[(ii)] if $\mathbf{x}_*$ is a CFP of $\tilde{f}$ { with respect to the Tychonoff topology}, then $\mathbf{x}_*=(x_*,x_*,...)$, where $x_*$ is a GCFP of $f$.
\end{itemize}
\end{theorem}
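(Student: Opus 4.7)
The plan is to read off the theorem directly from Lemma \ref{filip2} together with the definitions: identity (\ref{filip3}) represents the iterates $\tilde{x}^k$ of $\tilde{f}$ as $(x^k, \ldots, x^1, x_0, x_1, \ldots)$, so convergence of $(\tilde{x}^k)$ in the Tychonoff topology forces a constant limit and is equivalent to ordinary $X$-convergence of $(x^k)$. All that remains is to match the fixed-point equations of $f$ and $\tilde{f}$.

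First I would handle the implication ``$f$ has a GCFP $\Rightarrow$ $\tilde{f}$ has a Tychonoff CFP.'' Given a GCFP $x_* \in X$ of $f$, I set $\mathbf{x}_* := (x_*, x_*, \ldots)$ and compute
\[
\tilde{f}(\mathbf{x}_*) = (f(x_*, x_*, \ldots), x_*, x_*, \ldots) = (x_*, x_*, \ldots) = \mathbf{x}_*,
\]
so $\mathbf{x}_*$ is a fixed point of $\tilde{f}$. For any $x \in \Xinf$, the GCFP property gives $x^k \to x_*$ in $X$, and Lemma \ref{filip2}(ii) then yields $\tilde{f}^k(x) = \tilde{x}^k \to \mathbf{x}_*$ in the Tychonoff topology; hence $\mathbf{x}_*$ is a CFP of $\tilde{f}$ with respect to $\tau_{_T}$.

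For the converse direction (which simultaneously proves (ii)) I would fix a Tychonoff CFP $\mathbf{x}_*$ of $\tilde{f}$. Taking any $x \in \Xinf$, the convergence $\tilde{x}^k = \tilde{f}^k(x) \to \mathbf{x}_*$ combined with Lemma \ref{filip2}(i) forces $\mathbf{x}_* = (x_*, x_*, \ldots)$ for some $x_* \in X$ and also delivers $x^k \to x_*$ in $X$. Reading the fixed-point equation $\tilde{f}(\mathbf{x}_*) = \mathbf{x}_*$ off at the zeroth coordinate yields $f(x_*, x_*, \ldots) = x_*$, so $x_*$ is a generalized fixed point of $f$. Since $x^k \to x_*$ was obtained for an arbitrary $x \in \Xinf$, the point $x_*$ is in fact a GCFP of $f$, which gives both (i) and (ii).

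There is essentially no obstacle here; the whole content is packaged in Lemma \ref{filip2} and the shape of $\tilde{f}$. The one small point worth explicit mention is that the $x_*$ produced in the converse direction does not depend on the choice of the starting point $x \in \Xinf$, since $\mathbf{x}_*$ is a single element of $\Xinf$ fixed at the outset and $x_*$ is merely its common coordinate.
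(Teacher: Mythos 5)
Your proposal is correct and follows essentially the same route as the paper's own proof: both directions are reduced to Lemma \ref{filip2} applied to the representation (\ref{filip3}) of $\tilde{x}^k$, with the fixed-point equations of $f$ and $\tilde{f}$ matched coordinatewise. The only difference is that you spell out the verification $\tilde{f}(\mathbf{x}_*)=\mathbf{x}_*$ in both directions, which the paper leaves partly implicit.
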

\begin{proof}
Let $\mathbf{x}_*$ be a CFP of $\tilde{f}$. Then by Lemma \ref{filip2}(i), $\mathbf{x}_*=(x_*,x_*,...)$ for some $x_*\in X$ and $x^k\to x_*$ for every $x\in\Xinf$ (as $\tilde{x}^k\to \mathbf{x}_*$ by hypothesis). Also, 
\begin{equation*}
(f(x_*,x_*,...),x_*,x_*,....)=\tilde{f}(x_*,x_*,...)=(x_*,x_*,...),
\end{equation*}
so $x_*$ is a generalized fixed point of $f$, and in view of the above observations, it is a GCFP.  Conversely, if $x_*$ is a GCFP of ${f}$, then by Lemma \ref{filip2}(ii), $\tilde{x}^k\to \mathbf{x}_*$ for any $x\in\Xinf$, where $\mathbf{x}_*=(x_*,x_*,x_*,...)$. As $\mathbf{x}_*$ is obviously a fixed point of $\tilde{f}$, it is a CFP.
\end{proof}

{\begin{remark}\label{lastfil}\emph{It is worth to observe that the convergence of the sequence $(\tilde{x}^k)$ of iterates of $\tilde{f}$ with respect to the Tychonoff topology is equivalent to the convergence with respect to any of metrics $d_{p,q}$ and $d_{s,q}$ if $q<1$. Indeed, this follows from Corollary \ref{newf2} and (\ref{filip3}).}
\end{remark} 
}
\subsection{A fixed point theorem}
If $f:\Xinf\to X$, then let $L_{s,q}(f)$ be the Lipschitz constant of $f$ with respect to $d_{s,q}$ on $\Xinf$, and let $L_{p,q}(f)$ be the Lipschitz constant of $f$ with respect to $d_{p,q}$ on $\Xinf$. {Similarly, by $\tilde{L}_{s,q}(\tilde{f})$ and $\tilde{L}_{p,q}(\tilde{f})$ we denote the Lipschitz constants of corresponding map $\tilde{f}$.}
\begin{remark}\emph{
In this framework, Secelean's Theorem \ref{secelean} says that if $L_{s,1}(f)<1$, then $f$ admits a unique generalized fixed point, and for every $(x_n)\in\Xinf$, the sequence $(y_k)$ defined by (\ref{ciagseceleana}) converges to this fixed point.\\
Our main result says what happens if we assume contractive conditions with respect to $d_{p,q}$ or $d_{s,q}$ with $q<1$.}
\end{remark}
The next lemma shows the relationships between Lipschitz constants of $f$ and $\tilde{f}$ with respect to the considered metrics.
\begin{lemma}\label{filip5}
In the above frame, if $f:\ell_\infty(X)\to X$, then
\begin{itemize}
\item[(i)] $\tilde{L}_{s,q}(\tilde{f})\leq \max\{q, L_{s,q}(f)\}$, where $q\leq 1$;
\item[(ii)] $\tilde{L}_{p,q}(\tilde{f})\leq \left((L_{p,q}(f))^p+q\right)^{1/p}$, where $q<1$ and $p\geq 1$.
\end{itemize}
\end{lemma}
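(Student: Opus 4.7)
The plan is to simply unfold the definitions and exploit the fact that $\tilde{f}$ acts as $f$ in the $0$-th coordinate and as a shift on the remaining coordinates. Fix $x=(x_n), y=(y_n)\in\Xinf$. By the definition of $\tilde{f}$, the $n$-th coordinate of $\tilde{f}(x)$ is $f(x)$ when $n=0$ and $x_{n-1}$ when $n\geq 1$; likewise for $\tilde{f}(y)$. This is the only structural observation needed, and everything else is reindexing.

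For part (i), I would compute
$$d_{s,q}(\tilde{f}(x),\tilde{f}(y))=\max\left\{d(f(x),f(y)),\ \sup_{n\geq 1} q^n d(x_{n-1},y_{n-1})\right\}.$$
Substituting $m=n-1$ in the supremum and pulling out one factor of $q$ turns the right-hand term into $q\cdot d_{s,q}(x,y)$. The left-hand term is bounded by $L_{s,q}(f)\,d_{s,q}(x,y)$ by definition of the Lipschitz constant. Combining these yields
$$d_{s,q}(\tilde{f}(x),\tilde{f}(y))\leq \max\{L_{s,q}(f),q\}\cdot d_{s,q}(x,y),$$
which is the claim.

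For part (ii), the same unfolding gives
$$d_{p,q}(\tilde{f}(x),\tilde{f}(y))^p = d^p(f(x),f(y))+\sum_{n=1}^\infty q^n d^p(x_{n-1},y_{n-1}).$$
Reindexing the sum with $m=n-1$ and factoring out $q$ turns it into $q\cdot d_{p,q}(x,y)^p$. Bounding $d(f(x),f(y))\leq L_{p,q}(f)\,d_{p,q}(x,y)$ and raising to the $p$-th power gives
$$d_{p,q}(\tilde{f}(x),\tilde{f}(y))^p \leq \bigl(L_{p,q}(f)^p+q\bigr)\,d_{p,q}(x,y)^p,$$
and taking the $p$-th root finishes the proof.

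No real obstacle arises: the lemma is a direct calculation once one observes that the shift contributes an extra factor of $q$ in the $d_{s,q}$-norm and an extra additive $q$ inside the $p$-th power for the $d_{p,q}$-norm. The only care needed is correctly handling the $n=0$ term (which sees $f$) separately from the $n\geq 1$ terms (which see a pure shift), and making sure the reindexing of the sum (or supremum) is justified — both of which are straightforward because all terms involved are nonnegative.
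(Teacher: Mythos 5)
Your proof is correct and follows essentially the same route as the paper's: both unfold the definition of $\tilde{f}$, split off the $0$-th coordinate (bounded via the Lipschitz constant of $f$), and reindex the shifted coordinates to extract a factor of $q$ from the supremum (resp.\ an additive $q$ inside the $p$-th power). No substantive difference.
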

\begin{proof}
Let $x=(x_n), y=(y_n)\in \Xinf$. We have:
$$d_{s,q}(\tilde{f}(x), \tilde{f}(y)) = \sup{ \left\{d(f(x), f(y)), qd(x_0, y_0), ..., q^n d(x_{n-1}, y_{n-1}), ... \right\} } $$
$$\leq \sup{ \left(\{L_{s,q}(f) d_{s,q}(x, y)\}\cup \left\{q^n d(x_{n-1}, y_{n-1}): n\in\N \right\} \right)} $$
$$\leq\max\{L_{s,q}(f) d_{s,q}(x, y), q d_{s,q}(x, y)\} = \max\{q, L_{s,q}(f)\} d_{s,q}(x, y),$$
so we get (i). If $p\geq 1$ and $q<1$, then 
$$d_{p,q}(\tilde{f}(x), \tilde{f}(y)) = \left(d^p(f(x), f(y)) + \sum_{n=1}^{\infty} q^n d^p(x_{n-1}, y_{n-1}) \right)^{1/p}  $$
$$\leq \left((L_{p,q}(f))^p d_{p,q}^p(x, y) + q \sum_{n=1}^{\infty} q^{n-1}d^p(x_{n-1}, y_{n-1}) \right)^{1/p} \leq \left((L_{p,q}(f))^p d_{p,q}^p(x,y)+q d_{p,q}^p(x, y) \right)^{1/p}$$
$$=\left((L_{p,q}(f))^p+q\right)^{1/p} d_{p,q}(x,y),$$
so we get (ii). 
\end{proof}

We are ready to state the main result of the paper.
\begin{theorem}\label{filip7}
Assume that $(X,d)$ is a complete metric space, and $f:\Xinf\to X$ satisfies one of the following conditions:
\begin{equation*}
(Q)\;\;\;\;\;\;\;L_{s,q}(f)<1\;\;\;\;\mbox{for some }q\in(0,1);
\end{equation*}
\begin{equation*}
(P)\;\;\;\;\;\;\;L_{p,q}(f)<(1-q)^{1/p}\;\;\;\;\mbox{for some $q\in(0,1)$ and }p\in[1,\infty).
\end{equation*} 
Then $f$ has a GCFP.\\
Moreover, if $x_*\in X$ is a GCFP of $f$ and $x\in\Xinf$, then
\begin{itemize}
\item[(i)] if $L_{s,q}(f)<1$ for some $q<1$, {it holds}
\begin{equation}
\label{supestimation}d(x^k,x_*)\leq L_{s,q}(f)\frac{\max\{L_{s,q}(f),q\}^{k-1}}{1-\max\{L_{s,q}(f),q\}} d_{s,q}(\tilde{x}^1,\tilde{x}^0);
\end{equation}
\item[(ii)] if $L_{p,q}(f)<(1-q)^{1/p}$, {it holds}
\begin{equation}\label{pestimation}
d(x^k,x_*)\leq L_{p,q}(f)\frac{((L_{p,q}(f))^p+q)^{\frac{k-1}{p}}}{1-((L_{p,q}(f))^p+q)^{\frac{1}{p}}} d_{p,q}(\tilde{x}^1,\tilde{x}^0).
\end{equation}
\end{itemize}
\end{theorem}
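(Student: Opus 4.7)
The plan is to reduce Theorem~\ref{filip7} to a Banach-type contraction argument for the auxiliary map $\tilde{f}\colon \Xinf \to \Xinf$, using the weighted metrics from Section~2. By Lemma~\ref{filip5}, under~(Q) the map $\tilde{f}$ is $d_{s,q}$-Lipschitz with constant $L := \max\{q, L_{s,q}(f)\} < 1$, and under~(P) it is $d_{p,q}$-Lipschitz with constant $L := ((L_{p,q}(f))^p + q)^{1/p}$; the latter is strictly less than $1$ because $L_{p,q}(f)^p < 1-q$ by hypothesis. In what follows let $d'$ denote the relevant metric and $L$ the corresponding contraction constant.

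Fix an arbitrary $x = (x_n) \in \Xinf$ and set $\tilde{x}^k := \tilde{f}^k(x)$. The standard telescoping gives $d'(\tilde{x}^{k+1}, \tilde{x}^k) \leq L^k d'(\tilde{x}^1, \tilde{x}^0)$, so $(\tilde{x}^k)$ is $d'$-Cauchy. The main obstacle is that, as the example in Section~2 shows, $(\Xinf, d')$ need not be complete when $X$ is unbounded, so Banach's theorem is not directly available. To bypass this, I would invoke Corollary~\ref{newf2}(2), which requires only that the set $\{\tilde{x}^k_i : i, k \in \N^*\}$ be bounded. By~(\ref{filip3}) this set equals $\{x^j : j \geq 1\} \cup \{x_i : i \in \N^*\}$; the second part is bounded since $x \in \Xinf$, and for the first, note that $d(x^{k+1}, x^k) \leq d'(\tilde{x}^{k+1}, \tilde{x}^k) \leq L^k d'(\tilde{x}^1, \tilde{x}^0)$ (the $0$-th coordinate enters with weight one in both $d_{s,q}$ and $d_{p,q}$), so $(x^k)$ is Cauchy in the complete space~$X$ and hence bounded. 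Corollary~\ref{newf2}(2) then produces $\mathbf{x}_* \in \Xinf$ with $\tilde{x}^k \to \mathbf{x}_*$ in $d'$, and Lemma~\ref{filip2}(i) identifies $\mathbf{x}_* = (x_*, x_*, \dots)$ with $x_* = \lim_k x^k$. Continuity of $\tilde{f}$ passes the recurrence $\tilde{x}^{k+1} = \tilde{f}(\tilde{x}^k)$ to the limit, giving $\tilde{f}(\mathbf{x}_*) = \mathbf{x}_*$, i.e.\ $f(x_*, x_*, \dots) = x_*$. Since any contraction has at most one fixed point, $\mathbf{x}_*$---and hence $x_*$---is independent of the starting point $x$, so $x_*$ is indeed a GCFP of $f$.

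For the quantitative estimates, the standard telescoping bound for a contraction gives
$$d'(\tilde{x}^{k-1}, \mathbf{x}_*) \leq \frac{L^{k-1}}{1-L}\, d'(\tilde{x}^1, \tilde{x}^0).$$
Combined with $d(x^k, x_*) = d(f(\tilde{x}^{k-1}), f(\mathbf{x}_*)) \leq L'(f)\, d'(\tilde{x}^{k-1}, \mathbf{x}_*)$, where $L'(f)$ denotes $L_{s,q}(f)$ in case~(Q) and $L_{p,q}(f)$ in case~(P), this reproduces (\ref{supestimation}) and (\ref{pestimation}) immediately. The only non-routine point, as already noted, is handling the possible non-completeness of $d'$ on $\Xinf$; once boundedness of $(x^k)$ is secured via the $0$-th coordinate, the rest is a standard contraction-mapping calculation.
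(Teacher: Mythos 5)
Your proposal is correct and follows essentially the same route as the paper: use Lemma~\ref{filip5} to make $\tilde{f}$ a contraction for $d_{s,q}$ (resp.\ $d_{p,q}$), establish that $(\tilde{x}^k)$ is Cauchy and that the relevant coordinate set is bounded so that Corollary~\ref{newf2}(2) substitutes for completeness, identify the limit via Lemma~\ref{filip2}/Theorem~\ref{filip4}, and extract the error bounds from the standard contraction estimate. The only cosmetic differences are that you treat (Q) and (P) in parallel (the paper details only (P) and invokes the equivalence of the two conditions) and that you bound $d(x^{k+1},x^k)$ via the weight-one zeroth coordinate rather than via the Lipschitz constant of $f$, both of which are fine.
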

\begin{proof}
We first deal with the case $L_{p,q}(f)<(1-q)^{1/p}$. 
By Lemma \ref{filip5}, the Lipschitz constant of $\tilde{f}$ satisfies 
$$\tilde{L}_{p,q}(\tilde{f})\leq (L_{p,q}(f)^p+q)^{1/p}<((1-q)+q)^{1/p}=1,$$ so $\tilde{f}$ is a Banach contraction with respect to $d_{p,q}$ on $\Xinf$. Now take any $x=(x_n)\in\Xinf$. Then 
for every $m>k$, we have
$$
d_{p,q}(\tilde{x}^k,\tilde{x}^m)\leq d_{p,q}(\tilde{x}^k,\tilde{x}^{k+1})+...+d_{p,q}(\tilde{x}^{m-1},\tilde{x}^{m})\leq d_{p,q}\left(\tilde{f}^k(\tilde{x}^0),\tilde{f}^k(\tilde{x}^1)\right)+...+d_{p,q}\left(\tilde{f}^{m-1}(\tilde{x}^0),\tilde{f}^{m-1}(\tilde{x}^1)\right)\
$$
$$
\leq (\tilde{L}_{p,q}(\tilde{f}))^k(d_{p,q}(\tilde{x}^0,\tilde{x}^1)+...+(\tilde{L}_{p,q})^{m-k-1}d_{p,q}(\tilde{x}^0,\tilde{x}^1))\leq \frac{(\tilde{L}_{p,q}(\tilde{f}))^k}{1-\tilde{L}_{p,q}(\tilde{f})}d_{p,q}(\tilde{x}^0,\tilde{x}^1),
$$
{which means that $(\tilde{x}^k)$ is a Cauchy sequence with respect to $d_{p,q}$. Moreover
$$
d(x^{k+1},x^{m+1})=d(f(\tilde{x}^{k}),f(\tilde{x}^m))\leq L_{p,q}(f)d_{p,q}(\tilde{x}^{k},\tilde{x}^{m}) \leq L_{p,q}(f)\frac{(\tilde{L}_{p,q}(\tilde{f}))^k}{1-\tilde{L}_{p,q}(\tilde{f})}d_{p,q}(\tilde{x}^0,\tilde{x}^1),
$$
which means that $(x^k)$ is a Cauchy sequence in $X$. Hence the set $\{x^k: k\in\N\} \cup \{x_n: n\in\N^*\}$ is bounded and by Corollary \ref{newf2}(2), $\tilde{x}^k\overset{d_{p,q}}{\to} \mathbf{x}$ for some $\mathbf{x}\in\Xinf$. Since $\tilde{f}$ is continuous with respect to $d_{p,q}$, the point $\mathbf{x}$ is a fixed point of $\tilde{f}$, which must be unique as $\tilde{L}_{p,q}(\tilde{f})<1$. Hence $\mathbf{x}$ is a CFP of $\tilde{f}$ (with respect to Tychonoff topology -- see Remark \ref{lastfil}), and by Theorem \ref{filip4}, $\mathbf{x}=(x_*,x_*,...)$, where $x_*$ is a CGFP of $f$. Moreover, by the above computations, for every $x\in X$ and $m>k$, we have}
\begin{equation}\label{filip6}
d(x^{k+1},x^{m+1}) \leq L_{p,q}(f)\frac{(\tilde{L}_{p,q}(\tilde{f}))^k}{1-\tilde{L}_{p,q}(\tilde{f})}d_{p,q}(\tilde{x}^0,\tilde{x}^1)\leq L_{p,q}(f)\frac{((L_{p,q}(f))^p+q)^{\frac{k}{p}}}{1-((L_{p,q}(f))^p+q)^{\frac{1}{p}}} d_{p,q}(\tilde{x}^1,\tilde{x}^0). 
\end{equation} 
{Letting $m\to\infty$, we get}
$$
d(x^k,x_*)\leq  L_{p,q}(f)\frac{((L_{p,q}(f))^p+q)^{\frac{k-1}{p}}}{1-((L_{p,q}(f))^p+q)^{\frac{1}{p}}} d_{p,q}(\tilde{x}^1,\tilde{x}^0)
$$
for all $k\in\N$.\\
To get the assertion for assumption (Q) we could follow the same lines. However, as we will see in a moment, conditions (Q) and (P) are equivalent.
\end{proof}
\begin{remark}\label{abc3}\emph{
As was announced, a bit surprisingly, conditions (P) and (Q) are equivalent. In fact, each of them is also equivalent to a particular version of (P). More precisely, for every $f:\Xinf\to X$, the following conditions are equivalent:
\begin{itemize}
\item[(i)] $f$ satisfies (Q), that is, for some $q\in(0,1)$, $L_{s,q}(f)<1$;
\item[(ii)] $f$ satisfies (P), that is, for some $q\in(0,1)$ and $p\in[1,\infty)$, $L_{p,q}(f)<(1-q)^{1/p}$;
\item[(iii)] for every $q\in(0,1)$ there exists $p\in[1,\infty)$ such that $L_{p,q}(f)<(1-q)^{1/p}$.
\end{itemize} 
%
{We first prove $(i)\Rightarrow(iii)$. Assume that $L_{s,q}(f)<1$ for some $q\in(0,1)$, and choose any $q_0\in(0,1)$. Observe that
$$
\lim_{p\to\infty}(1-q_0)^{1/p}=1,
$$
so we can take $p\in[1,\infty)$ so that $L_{s,q}(f)<(1-q_0)^{1/p}$ and also $q^p\leq q_0$. Then let $q'\in[q,1)$ be such that $(q')^p=q_0$. By Proposition \ref{porownanie}(i),(ii) we have for all $x,y\in \Xinf$,
$$
d(f(x),f(y))\leq L_{s,q}(f)d_{s,q}(x,y)\leq L_{s,q}(f)d_{s,q'}(x,y)\leq L_{s,q}(f)d_{p,(q')^p}(x,y)=L_{s,q}(f)d_{p,q_0}(x,y).
$$
Hence $L_{p,q_0}(f)\leq L_{s,q}(f)<(1-q_0)^{1/p}$. Thus we get $(iii)$.\\
Implication
$(iii)\Rightarrow(ii)$ is obvious.\\
Finally, we prove $(ii)\Rightarrow(i)$.
Assume that $L_{p,q}(f)<(1-q)^{1/p}$ for some $q\in(0,1)$ and $p\in[1,\infty)$. By~Proposition~\ref{porownanie}(iii) for $q' > q^{1/p}$:
$$L_{s,q'}(f) \leq \frac{L_{p,q}(f)}{(1-\frac{q}{(q')^p})^{1/p}}.$$
Taking the limit with $q'\to 1$, we get
$$\lim_{q'\to1}L_{s,q'}(f) \leq \frac{L_{p,q}(f)}{(1-{q})^{1/p}}<\frac{(1-q)^{1/p}}{(1-q)^{1/p}}=1,$$
which means that $L_{s,q'}(f)<1$ for some $q'<1$ and we get (i).}}
\end{remark}

\begin{remark}\label{newremarka}\emph{{
In view of (iii) from Remark \ref{abc3}, we see that for every $q_0\in(0,1)$, condition (P) is equivalent to
$$
(P_{q_0})\;\;\;L_{p,q_{0}}(f)<(1-q_0)^{1/p}\;\;\;\mbox{for some }p\in[1,\infty).
$$
Later we will see that we cannot restrict to arbitrary $q_0$ in $(Q)$, and also we cannot restrict to arbitrary $p_0\in[1,\infty)$ in (P).}}
\end{remark}

\begin{remark}\emph{
Since (P) and (Q) are equivalent, formally it is enough to consider just one of them ((Q) seems to be more natural). On the other hand, the theory works properly for both types of metrics. In particular, we get natural estimations (\ref{supestimation}) and (\ref{pestimation}).
}\end{remark}
\begin{remark}\emph{By Proposition \ref{porownanie}(ii) we see that for any $q\in(0,1)$, $L_{s,1}(f)\leq L_{s,q}(f)$. Hence if (Q) (or, equivalently, (P)) is satisfied, then also the assumptions of Theorem~\ref{secelean} are  satisfied. {(In fact, at the end of~\cite{S}, Secelean considered the metric $d_{1,\frac{1}{2}}$ and observed these relationships.)}.
It turns out that the converse is not true, as the next example shows.}
\end{remark}
\begin{example}\label{abc2}\emph{
Let $X:=[0,1]$ and $f((x_n)) := \frac{1}{2} \sup\{ x_n:n\in\N^*\}$. Then clearly $L_{s,1}(f) =\frac{1}{2}< 1$, so the assumptions of Theorem~\ref{secelean} are satisfied and $x_*=0$ is a generalized fixed point of $f$. However, if $x=(x_n)\in\ell_\infty([0,1])$ is such that for some $i\in\N^*$, $x_i:=\delta>0$, then for any $k\in\N$,
$x^k \geq  \frac{1}{2}\delta.$
In particular, the sequence of generalized iterations $(x^k)$ does not converge to $x_*=0$ and $f$ has no GCFP.}
\end{example}
\begin{remark}\emph{
Theorem~\ref{filip7} can be formulated in a more general way. Namely, assume that $(X,d)$ is complete and a sequence $(a_n)$ of positive reals satisfies $M:=\sup_{n\in\N} \frac{a_{n}}{a_{n-1}} < 1$, and let $f:\ell_\infty(X)\to X$ be such that one of the following conditions holds:
\begin{itemize}
\item[(i)] $a_0L_{s,(a_n)}(f)<1$;
\item[(ii)] $L_{p,(a_n)}(f)<\left(\frac{1-M}{a_0}\right)^{1/p}$ for some $p\in[1,\infty)$,
\end{itemize}
where $L_{s,(a_n)}(f)$ and $L_{p,(a_n)}(f)$ are Lipschitz constants of $f$ with respect to metrics $d_{s,(a_n)}$ and $d_{p,(a_n)}$, respectively. Then $f$ has a GCFP.\\
However, this assertion follows directly from Theorem \ref{filip7}. Indeed, for any $x=(x_n), y=(y_n) \in \Xinf$ we have:
$$d_{p, (a_n)}(x,y)  = \left( \sum_{n=0}^\infty a_n d^p(x_n, y_n) \right)^{1/p} \leq \left( \sum_{n=0}^\infty a_0M^n d^p(x_n, y_n) \right)^{1/p} = a_0^{1/p} d_{p, M}(x,y)$$
and therefore $L_{p, M}(f) \leq a_0^{1/p}L_{p, (a_n)}(f)$, so (ii) implies (P). Similarly, we can see that (i) implies (Q).}
\end{remark}

In the last section we are going to use Theorem \ref{filip7} to prove Theorem \ref{newf1}. However, now we will show another connection between mappings on finite Cartesian products and mappings defined on spaces of sequences:

\begin{theorem}\label{approx}
Assume that $(X,d)$ is a complete metric space and let $f:\Xinf\to X$ satisfy (Q) (or, equivalently,~(P)) for $q\in(0,1)$. Choose any $x\in X$ and for any $n\in\N$, define $f_n:X^n \to X$ as follows:
\begin{equation}\label{finiteapprox}
\forall_{(x_0, ..., x_{n-1})\in X^n}\ f_n(x_0, ..., x_{n-1}) := f(x_0, ..., x_{n-1}, x, x, ...).
\end{equation}
Then for any $n\in\N$, $Lip(f_n)\leq L_{s,q}(f)$ (w.r.t. maximum metric $d_m$ on $X^n$) and the sequence $(x_*^n)$ of generalized fixed points of $f_n$s' (whose existence follows from Theorem \ref{newf1}) converges to $x_*$, a generalized fixed point of $f$. More precisely, for every $n\in\N$,
\begin{equation}\label{abc1}
d(x_*^n,x_*)\leq q^n\frac{L_{s,q}(f)}{1-L_{s,q}(f)}d(x_*,x).
\end{equation}
\end{theorem}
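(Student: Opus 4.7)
The plan is to establish three items in order: the Lipschitz bound $Lip(f_n) \leq L_{s,q}(f) < 1$ (so that Theorem \ref{newf1} applies and produces the unique generalized fixed point $x_*^n$ of $f_n$); the existence of a generalized fixed point $x_*$ of $f$ itself (immediate from Theorem \ref{filip7}); and finally the quantitative estimate \eqref{abc1}, from which the convergence $x_*^n \to x_*$ follows at once since $q < 1$.

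For the Lipschitz bound, I take $(x_0,\ldots,x_{n-1}), (y_0,\ldots,y_{n-1}) \in X^n$ and extend them by the fixed tail $(x,x,\ldots)$ to get $\bar{x}, \bar{y} \in \Xinf$. Since the two sequences agree past coordinate $n-1$, one has $d_{s,q}(\bar{x}, \bar{y}) = \max_{0 \leq i \leq n-1} q^i d(x_i, y_i)$, which is at most $d_m((x_0,\ldots,x_{n-1}), (y_0,\ldots,y_{n-1}))$ because $q^i \leq 1$. Combining with $d(f(\bar{x}), f(\bar{y})) \leq L_{s,q}(f)\, d_{s,q}(\bar{x}, \bar{y})$ yields the desired bound on $Lip(f_n)$.

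For the estimate, I exploit the fixed-point identities $f(x_*^n,\ldots,x_*^n, x, x,\ldots) = x_*^n$ and $f(x_*, x_*,\ldots) = x_*$. Writing $\mathbf{y}^n$ and $\mathbf{y}^*$ for the two inputs, the Lipschitz condition of $f$ gives $d(x_*^n, x_*) \leq L_{s,q}(f)\, d_{s,q}(\mathbf{y}^n, \mathbf{y}^*)$. Splitting the supremum defining $d_{s,q}$ by coordinate range: for $0 \leq i < n$ the terms $q^i d(x_*^n, x_*)$ are maximised at $i = 0$ with value $d(x_*^n, x_*)$, while for $i \geq n$ the terms $q^i d(x, x_*)$ are maximised at $i = n$ with value $q^n d(x_*, x)$. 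Bounding the maximum of these two quantities by their sum gives
\begin{equation*}
d_{s,q}(\mathbf{y}^n, \mathbf{y}^*) \leq d(x_*^n, x_*) + q^n d(x_*, x),
\end{equation*}
and substituting this in and collecting $d(x_*^n, x_*)$ on the left produces \eqref{abc1} directly.

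The proof is essentially a two-line calculation once one sees the right splitting; the only mildly delicate point is the asymmetry that the head of the supremum carries no $q^n$ factor (since index $0$ appears) whereas the tail does carry $q^n$, and this is precisely what drives the geometric decay of the approximation error.
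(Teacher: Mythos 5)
Your proposal is correct and follows essentially the same route as the paper: the same extension-by-constant-tail computation for $Lip(f_n)\leq L_{s,q}(f)$, and the same comparison of the two fixed-point identities via $d_{s,q}\leq d(x_*^n,x_*)+q^n d(x_*,x)$ to extract \eqref{abc1}. No gaps.
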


\begin{proof}
Assume that $L_{s,q}(f)<1$ for some $q\in(0,1)$. For every $n\in\N$, we have
$$d\left(f_n(x_0,...,x_{n-1}), f_n(y_0,...,y_{n-1})\right) = d\left(f(x_0, ..., x_{n-1}, x, x, ...), f(y_0, ..., y_{n-1}, x, x, ...)\right)  $$
$$
\leq L_{s,q}(f)d_{s,q}((x_0,...,x_{n-1},x,...),(y_0,...,y_{n-1},x,...))$$ $$=L_{s,q}(f)\max\{q^kd(x_k,y_k):k=0,...,n-1\}\leq L_{s,q}(f)d_m((x_0,...,x_{n-1}),(y_0,...,y_{n-1})).
$$
Hence $Lip(f_n)\leq L_{s,q}(f)<1$
and the assumptions of Theorem \ref{newf1} are fulfilled. Thus $f_n$ has a fixed point $x_*^n \in X$. Then we have
$$d(x_*^n, x_*) = d(f(x_*^n, ..., x_*^n, x, ...), f(x_*, ..., x_*, x_*, ...)) \leq L_{s,q}(f) d_{s,q}((x_*^n, ..., x_*^n, x,  ...), (x_*, ..., x_*, x_*,  ...)) $$
$$= L_{s,q}(f) \max\{d(x_*^n,x_*),q^nd(x_*,x)\}\leq L_{s,q}(f) (d(x_*^n,x_*)+q^nd(x_*,x)).
$$
Hence 
$$
d(x_*^n,x_*)\leq q^n\frac{L_{s,q}(f)}{1-L_{s,q}(f)}d(x_*,x).
$$
\end{proof}

Finally, we give an example which shows that the thesis of the above theorem need not hold under the assumption $L_{s,1}(f)<1$:
\begin{example}\emph{Consider function $f$ from Example \ref{abc2}. Take any $x>0$ and for every $n\in\N$, let $f_n:[0,1]^n\to [0,1]$ be defined by (\ref{finiteapprox}), i.e., 
$$
f_n(x_0,..,x_{n-1}):=f(x_0,...,x_{n-1},x,x,...).
$$
Then, clearly, $f_n(x_0,...,x_{n-1})\geq \frac{1}{2}x$ for every $(x_0,...,x_{n-1})$. In fact, $x^n_*=\frac{1}{2}x$, so $(x^n_*)$ does not converge to $x_*=0$.}
\end{example}

\section{An example}
To illustrate the considered machinery, we will calculate Lipschitz constants $L_{p,q}(f)$ and $L_{s,q}(f)$ in the case of mappings $f:\ell_\infty(\R)\to\R$ of the form
\begin{equation}\label{filip8}
f(x)=\sum_{n\in\N^*}b_nx_n,\;\;\mbox{for }x=(x_n)\in\ell_\infty(\R)
\end{equation}
for some sequence $(b_n)$ of reals with $\sum_{n\in\N^*}|b_n|<\infty$. We will use these calculations in a discussion connected with Remark \ref{abc3}.
\begin{proposition}\label{filip9}
If $f:\ell_\infty(\R)\to\R$ is defined by (\ref{filip8}), then 
$$L_{s,q}(f)=\sum_{n=0}^\infty\frac{|b_n|}{q^n}$$ and if $q<1$, then $$L_{p,q}(f)=\left\{\begin{array}{ccc}\left(\sum_{n=0}^\infty \frac{|b_n|^{p/(p-1)}}{(q^n)^{1/(p-1)}} \right)^{(p-1)/p}&\mbox{if}&p>1,\\
\sup\left\{\frac{|b_n|}{q^n}:n\in\N^*\right\}&\mbox{if}&p=1.\end{array}\right.$$
\end{proposition}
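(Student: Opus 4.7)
The plan is to establish each Lipschitz constant by matching upper and lower bounds. The upper bound comes from H\"older's inequality (or its sup--sum endpoint versions), while the lower bound comes from a sequence of vectors $x^N\in\Xinf$ that approximately saturate H\"older; these $x^N$ are obtained by truncating the genuine extremal sequence, which is itself unbounded and hence not in $\Xinf$.

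For the sup case I would factor $b_n(x_n-y_n)=\frac{|b_n|}{q^n}\cdot q^n(x_n-y_n)$ and estimate termwise:
$$|f(x)-f(y)|\leq \sum_{n=0}^\infty \frac{|b_n|}{q^n}\cdot q^n|x_n-y_n|\leq \Bigl(\sum_{n=0}^\infty \frac{|b_n|}{q^n}\Bigr) d_{s,q}(x,y),$$
so $L_{s,q}(f)\leq \sum_{n\geq 0}|b_n|/q^n$. For $d_{p,q}$ with $p>1$, I would split $b_n(x_n-y_n)=\bigl(b_n q^{-n/p}\bigr)\cdot\bigl(q^{n/p}(x_n-y_n)\bigr)$ and apply H\"older with conjugate exponents $p$ and $p/(p-1)$; this produces exactly the claimed upper constant. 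For $p=1$, factor $b_n(x_n-y_n)=(b_n/q^n)\cdot q^n(x_n-y_n)$ and take the sup in the first factor and the sum in the second, to obtain $L_{1,q}(f)\leq \sup_n|b_n|/q^n$.

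For the lower bounds I would construct truncated extremals. In the sup case set $x^N_n:=\mathrm{sgn}(b_n)/q^n$ for $n\leq N$ and $x^N_n:=0$ otherwise; then $x^N\in\Xinf$, $d_{s,q}(x^N,0)=1$ and $f(x^N)=\sum_{n=0}^N |b_n|/q^n$, so letting $N\to\infty$ yields $L_{s,q}(f)\geq \sum_{n\geq 0}|b_n|/q^n$. For $p>1$, the equality case in H\"older suggests $x^N_n:=\mathrm{sgn}(b_n)|b_n|^{1/(p-1)}q^{-n/(p-1)}$ for $n\leq N$ and $0$ otherwise; a direct computation using $n-np/(p-1)=-n/(p-1)$ gives $d_{p,q}(x^N,0)^p=f(x^N)=\sum_{n=0}^N|b_n|^{p/(p-1)}q^{-n/(p-1)}$, so the ratio $f(x^N)/d_{p,q}(x^N,0)$ equals $\bigl(\sum_{n=0}^N |b_n|^{p/(p-1)} q^{-n/(p-1)}\bigr)^{(p-1)/p}$, which gives the desired lower bound in the limit $N\to\infty$. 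For $p=1$ the extremal configuration is concentrated at a single coordinate: take $x^N$ supported only at $n=N$ with value $\mathrm{sgn}(b_N)/q^N$, giving $d_{1,q}(x^N,0)=1$ and $f(x^N)=|b_N|/q^N$, and then take the supremum over $N\in\N^*$.

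The only subtle point is that the genuine H\"older-equality sequences have terms like $q^{-n/(p-1)}$ which blow up, hence do not belong to $\Xinf$; truncation at level $N$ repairs this while sacrificing only a controlled tail, and everything passes to the limit. The same argument also handles the degenerate case where the claimed constants on the right-hand sides are $+\infty$: the above ratios then tend to $+\infty$, forcing the corresponding Lipschitz constant to be $+\infty$ as well, consistent with the stated equalities in $[0,\infty]$.
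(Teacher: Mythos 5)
Your proposal is correct and follows essentially the same route as the paper: termwise/H\"older upper bounds with the splitting $b_n(x_n-y_n)=\bigl(b_nq^{-n/p}\bigr)\bigl(q^{n/p}(x_n-y_n)\bigr)$, and lower bounds from finitely supported truncations of the H\"older-extremal sequences (the paper's choices $x_n=\mathrm{sgn}(b_n)(|b_n|/q^n)^{1/(p-1)}$ and $x_n=\mathrm{sgn}(b_n)/q^n$ coincide with yours). The only cosmetic difference is in the $p=1$ lower bound, where the paper scales the single nonzero coordinate to $1$ rather than to $1/q^{n_0}$; the ratio is the same.
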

\begin{proof}
Let $q<1$. Set $I_1:=\sup_{n\in\N^*}\frac{|b_n|}{q^n}$. Then for every $x=(x_n),y=(y_n)\in\ell_\infty(\R)$, we have
$$d(f(x), f(y)) = \left|\sum_{n=0}^\infty b_nx_n - b_ny_n\right| \leq \sum_{n=0}^\infty |b_n|\left|x_n-y_n\right| $$ $$=\sum_{n=0}^\infty \frac{|b_n|}{q^n}q^n\left|x_n-y_n\right|\leq I_1\sum_{n=0}^\infty q^n\left|x_n-y_n\right|=I_1d_{1,q}(x,y).$$
Now assume that $I_1<\infty$, and let $\varepsilon>0$. Then there is $n_0$ such that $\frac{|b_{n_0}|}{q^{n_0}}\geq I_1-\varepsilon$. If $y_n=0$ for all $n\in\N^*$ and $x_n=0$ for $n\neq n_0$ and $x_{n_0}=1$, then
$$
|f(x)-f(y)|=|b_{n_0}x_{n_0}|=\frac{|b_{n_0}|}{q^{n_0}}q^{n_0}|x_{n_0}|=\frac{|b_{n_0}|}{q^{n_0}}d_{1,q}(x,y)\geq (I_1-\varepsilon)d_{1,q}(x,y).
$$
Hence $L_{1,q}=I_1$. In a similar way we can show that $L_{1,q}=I_1$ when $I_1=\infty$.\\
Now assume $p>1$. 
Then for every $x=(x_n),y=(y_n)\in\ell_\infty(\R)$, we have by the H\"older inequality:
$$|f(x)-f(y)| = \left|\sum_{n=0}^\infty b_n(x_n-y_n)\right| \leq \sum_{n=0}^\infty \frac{|b_n|}{(q^n)^{1/p}}\cdot (q^n)^{1/p} \left|x_n-y_n\right|   $$
$$\leq \left(\sum_{n=0}^\infty \frac{|b_n|^{p/(p-1)}}{(q^n)^{1/(p-1)}} \right)^{(p-1)/p} \left( \sum_{n=0}^\infty q^n |x_n-y_n|^p \right)^{1/p} = \left(\sum_{n=0}^\infty \frac{|b_n|^{p/(p-1)}}{(q^n)^{1/(p-1)}} \right)^{(p-1)/p} d_{p,q}((x_n), (y_n)).$$
Observe that the first inequality is the equality if $b_n(x_n-y_n)\geq 0$ for all $n\in\N^*$.
Moreover, from the H\"older inequality we know that the second inequality is the equality iff the sequences $\left(\left(\frac{|b_n|}{(q^n)^{1/p}}\right)^{p/(p-1)}\right)$, $\left(\left((q^n)^{1/p}|x_n-y_n|\right)^p\right)$ are linearly dependent. For every $n\in\N^*$, let $y_n:=0$, and define
\begin{equation}\label{postac x_n}
x_n :=\left\{\begin{array}{ccc} \on{sgn}(b_n)\left(\frac{|b_n|}{q^n}\right)^{1/(p-1)} &\mbox{if}&n\leq N,\\0&\mbox{if}&n>N,\end{array}\right.
\end{equation}
where $\on{sgn}(\cdot)$ denotes the sign function. Then by previous observations, replacing $(b_n)$ by $(b_n')$ defined by $b_n':=b_n$ for $n\leq N$ and $b_n':=0$ for $n>N$, we have $$|f(x)-f(y)|=\left(\sum_{n=0}^N \frac{|b_n|^{p/(p-1)}}{(q^n)^{1/(p-1)}} \right)^{(p-1)/p}d_{p,q}(x,y).$$
Since $N$ was taken arbitrarily, we get $L_{p,q}(f)=\left(\sum_{n=0}^\infty \frac{|b_n|^{p/(p-1)}}{(q^n)^{1/(p-1)}} \right)^{(p-1)/p}$.\\
Finally, for any $q\leq 1$ and every $x=(x_n),y=(y_n)\in\ell_\infty(\R)$, we have
$$
|f(x)-f(y)|\leq\sum_{n=0}^\infty|b_n||x_n-y_n|=\sum_{n=0}^\infty\frac{|b_n|}{q^n}q^n|x_n-y_n|\leq\left(\sum_{n=0}^\infty \frac{|b_n|}{q^n}\right) d_{s,q}(x,y).
$$
Now let $y_n:=0$ for all $n\in\N^*$, fix any $N\in\N$, and define $x_n:=0$ for $n>N$ and for $n=0,...,N$, set $x_n:=\frac{\on{sgn}(b_n)}{q^n}$. Then
$$
|f(x)-f(y)|=\left|\sum_{n=0}^N b_nx_n\right|=\left|\sum_{n=0}^N\frac{b_n}{q^n}q^n\frac{\on{sgn}(b_n)}{q^n}\right|=\sum_{n=0}^N\frac{|b_n|}{q^n}\cdot 1=\left(\sum_{n=0}^N\frac{|b_n|}{q^n}\right) d_{s,q}(x,y).
$$
Since $N$ was arbitrary, we have $L_{s,q}(f)=\sum_{n\in\N^*}\frac{|b_n|}{q^n}$.
\end{proof}
{\begin{remark}\emph{It is very likely that the above result can be obtained from functional analysis machinery, since~$f$ is a linear map which is a sum of linear maps. However, we presented here the proof for the sake of completeness.}
\end{remark}}
\begin{example}\emph{
We will consider functions $f:\ell_\infty(\R)\to\R$ of the form (\ref{filip8}) with different sequences $(b_n)$.\\
(1) Let $b_0=0$ and $b_n=b^n$, where $b\in(0,\frac{1}{2}]$ is fixed. By Proposition \ref{filip9},
 $$
L_{s,q}(f)=\sum_{n=1}^\infty\left(\frac{b}{q}\right)^n=\left\{\begin{array}{ccc}\frac{b}{q-b}&\mbox{if}&q>b,\\
\infty&\mbox{if}&q\leq b.\end{array}\right.
$$
Now if $b<1/2$, then $L_{s,q}(f)<1$ iff $q>2b$. This shows that in the formulation of condition (Q) we cannot restrict to some particular value $q_0$ (compare Remark~\ref{newremarka}).\\
If $b=1/2$, then $\sum_{n=1}^\infty b_n=1$, so every $x\in \R$ is a generalized fixed point of $f$. Also  $\lim_{q\to 1}L_{s,q}(f)=1$, which shows that in Theorem \ref{filip7} we cannot assume that $L_{s,q}(f)>1$.\\
(2) Let $b_n=0$ for $n\neq 1$ and $b_1=b$, where $b\in(0,1]$ is fixed. By Proposition \ref{filip9}, for every $p\in[1,\infty)$,
$$
L_{p,q}(f)=\frac{b}{q^{1/p}}.
$$
Now if $b<1$, then $L_{p,q}(f)<(1-q)^{1/p}$ iff $b<(q-q^2)^{1/p}$. In particular, we can choose $q\in(0,1)$ and $p\in[1,\infty)$ such that $L_{p,q}(f)<(q-q^2)^{1/p}$. However, if we fix $p_0\in[1,\infty)$, then 
$$\sup\left\{(q-q^2)^{1/p}: p\in[1,p_0], q\in (0,1)\right\}=\frac{1}{4^{1/p_0}}<1.$$ 
This shows that in the formulation of condition (P) we cannot restrict to some particular value of $p_0$.\\
If $b=1$, then every $x\in\R$ is a generalized fixed point of $f$. Also, for every $q\in(0,1)$, $\lim_{p\to\infty}\frac{L_{p,q}(f)}{(1-q)^{1/p}}=\lim_{p\to\infty}\frac{1}{(q-q^2)^{1/p}}=1$. This shows that in Theorem \ref{filip7} we cannot assume that $L_{p,q}(f)>(1-q)^{1/p}$.}
\end{example}

\section{Applications}
At first we show that Theorem \ref{filip7} implies Theorem \ref{newf1} and, in particular, the classical Banach fixed point theorem. Recall that by $X^m$ we denote the Cartesian product of $m$ copies of $X$ and we endow $X^m$ with the maximum metric
$$
d_m((x_0,...,x_{m-1}),(y_0,...,y_{m-1})):=\max\{d(x_0,y_0),...,d(x_{m-1},y_{m-1})\}.
$$
\begin{proof}(of Theorem \ref{newf1})
Choose $q\in(0,1)$ such that ${Lip(g)}<q^{m-1}$. 
Define $f:\Xinf\to X$ by $f(x_0,x_1,x_2,...):=g(x_0,...,x_{m-1})$. For every $x=(x_n),y=(y_n)\in\Xinf$, we have
$$
d(f(x),f(y))=d(g(x_0,...,x_{m-1}),g(y_0,...,y_{m-1}))\leq Lip(g)\max\{d(x_0,y_0),...,d(x_{m-1},y_{m-1})\}$$ $$\leq \frac{Lip(g)}{q^{m-1}}\max\{q^0d(x_0,y_0),...,q^{m-1}d(x_{m-1},y_{m-1})\}\leq\frac{Lip(g)}{q^{m-1}}d_{s,q}(x,y).
$$
Hence $L_{s,q}(f)\leq \frac{Lip(g)}{q^{m-1}}<1$, so mapping $f$ satisfies the assumptions of Theorem \ref{filip7}. It remains to observe that if $x_0,...,x_{m-1}\in X$, then sequence $(x_k)$ defined by (\ref{proceduraiteracyjna}), is the sequence of generalized iterates of $f$ at $x:=(x_{m-1},...,x_0,x_0,x_0,...)$.
\end{proof}

The second application of our result deals with a recursive procedure which $"$looks back$"$ at all previously defined elements.
\begin{example}\emph{
Fix $(b_n)\subset\R$, $c\in\R$, and consider the sequence $(x^k)$ defined by the following linear recursion:
$$\left\{ \begin{array}{ll}
& x^1 := c,\\
& x^k := c+b_0x^{k-1}+b_1x^{k-2}+...+b_{k-2}x^1, \;\;k\geq 2.
\end{array} \right.$$
Then $(x^k)$ is the sequence of iterates of $y=(0,0,...)$, of the map $f(x):=\sum_{n=0}^\infty b_nx_n+c$. Thus if the assumptions of Theorem \ref{filip7} are satisfied (the Lipschitz constants can be calculated as in Proposition \ref{filip9}), then $x^k\to x_*$, where $x_*$ is the GCFP of $f$, that is
$$
x_* = f(x_*, x_*, ...) = \sum_{n=0}^\infty b_n x_*+c =\left(\sum_{n=0}^\infty b_n\right)x_*+c,
$$
which gives $x_*=\frac{c}{1-\left(\sum_{n=0}^\infty b_n\right)}$.\\
For example, assume that $b_n := \frac{1}{3\cdot 2^n},\; n\in\N^*$ and $c=1$. Setting $q=\frac{4}{5}$, we have that $L_{s,q}(f) = \frac{8}{9}$, so $f$ fulfills the assumptions of Theorem \ref{filip7}. Thus $x^k\to x_*=\frac{c}{1-\sum_{n\in\N^*}b_n}=3$. Moreover, by the second part of Theorem~\ref{filip7}, for every $k\in\N$,
$$|x^k-3|\leq L_{s,q}(f)\frac{\max\{L_{s,q}(f),q\}^{k-1}}{1-\max\{L_{s,q}(f),q\}}d_{s,q}(\tilde{x}^1,(0))=9\left(\frac{8}{9}\right)^k
$$
since $d_{s,q}(\tilde{x}^1,(0))=d_{s,q}((c,0,0,...),(0,0,...))=c=1$.}
\end{example}
{
\begin{remark}\emph{
As we have already mentioned, Secelean \cite{S} used his theorem to study the Hutchinson--Barnsley theory of fractals for maps defined on $\Xinf$. In our paper \cite{MS} we use results of this article to obtain an appropriate version of the Hutchinson--Barnsley theory in such setting. 
}\end{remark}
}

\end{document}